\theoremstyle{plain}
\newtheorem{theorem}{Theorem}[section]                                          
\newtheorem{proposition}[theorem]{Proposition}                          
\newtheorem{lemma}[theorem]{Lemma}
\theoremstyle{definition}
\theoremstyle{remark}
\newtheorem{remark}[theorem]{Remark}
\DeclareMathOperator{\p}{P}
\title{Complexities of Erez self-dual normal bases}
\author{Blondeau Da Silva Stéphane}
\begin{document}

\maketitle

\begin{abstract}
The complexities of self-dual normal bases, which are candidates for the lowest complexity basis of some defined extensions, are determined with the 
help of the number of all but the simple points in well chosen minimal Besicovitch arrangements. In this article, these values are first compared with 
the expected value of the number of all but the simple points in a minimal randomly selected Besicovitch arrangement in ${\mathbb{F}_d}^2$ for the 
first $370$ prime numbers $d$. Then, particular minimal Besicovitch arrangements which share several geometrical properties with the arrangements 
considered to determine the complexity will be considered in two distinct cases.
\end{abstract}

\section*{Introduction}

Let $q$ be a prime power, $\mathbb{F}_q$ be the field of $q$ elements and $n$ be a positive integer. We consider the Galois group of the 
extension $\mathbb F_{q^n}/\mathbb F_q$, which is a cyclic group generated by the Frobenius automorphism $\Phi:x\mapsto x^q$. There exists an $\alpha$ 
that generates a "normal" basis for $\mathbb F_{q^n}/\mathbb F_q$, \textit{i.e.} a basis consisting of the orbit $(\alpha,\alpha^q,...,
\alpha^{q^{n-1}})$ of $\alpha$ under the action of the Frobenius. The difficulty of multiplying two elements of the extension expressed in this basis is 
measured by the complexity of $\alpha$, namely the number of non-zero entries in the multiplication-by-$\alpha$ matrix: $\big(Tr(\alpha\alpha^{q^i}
\alpha^{q^j})\big)_{0\le i,j\le n-1}$, where $Tr$ is the trace map from $\mathbb F_{q^n}$ to $\mathbb F_q$ (\cite[4.1]{Menezes}). As a large number of 
zero in this matrix enables faster calculations, finding normal bases with low complexity is a significant issue. 

Self-dual normal bases are particular normal bases which verify $Tr(\alpha^{q^i}\alpha^{q^j})=\delta_{i,j}$ (for $0\le i,j\le n-1$), where $\delta$ is 
the Kronecker delta. Arnault \textit{et al.} in \cite{APV} have identified the lowest complexity of self-dual normal bases for extensions of low degree 
and have showed that the best complexity of normal bases is often achieved from a self-dual normal basis. In \cite{PV}, Pickett and Vinatier considered 
cyclotomic extensions of the rationals generated by $d^2$-th roots of unity, where $d$ is a prime. The construction they use yields a candidate for the 
lowest complexity basis for $\mathbb F_{p^d}/\mathbb F_p$, where $p\neq d$ is a prime which does not split in the chosen extension. They prove that the 
multiplication table of this basis can be geometrically interpreted by means of an appropriate minimal Besicovitch arrangement. The complexity of 
the basis, denoted by $C_d$, is here equal to the number of all but the simple points generated by this arrangement in ${\mathbb{F}_d}^2$.

After a brief overview of the properties this arrangement have, we will compare the complexity $C_d$ with the expected value of the number of all but the 
simple points in a minimal randomly selected Besicovitch arrangement in ${\mathbb{F}_d}^2$ for the first $370$ prime numbers $d$. The expectations will 
be determined using Blondeau Da Silva's results in \cite{ste}. In a third part, we will consider particular minimal Besicovitch arrangements which share 
several geometrical properties with the arrangements considered to determine the complexity. We will again compare in this part, for the first $370$ 
prime numbers $d$, $C_d$ with the expected value of the number of all but the simple points in the randomly selected mentioned above arrangement.

\section{The minimal Besicovitch arrangement providing the complexity}\label{s1}

Let $d$ be a prime number and ${\mathbb{F}_d}$ be the $d$ elements finite field.

A line, in ${\mathbb{F}_d}^2$, is a one-dimensional affine subspace. A Besicovitch arrangement $B$ is a set of lines that contains at least one line in 
each direction. A minimal Besicovitch arrangement is a Besicovitch arrangement that is the union of exactly $d+1$ lines in ${\mathbb{F}_d}^2$ (see 
\cite{ste}).

The minimal Besicovitch arrangement considered, brought out by Pickett and Vinatier (\cite{PV}), and denoted by $\mathscr{L}$, is composed of $d+1$ 
lines with the following equations:
\[\left \{\begin{array}{l @{} l}
    L_a:& \quad ax-(a+1)y-p(a)=0\qquad  \text{for }a\in{\mathbb{F}_d}, \\
    L_\infty:& \quad x-y=0\enspace,
\end{array}
\right.\]
where $p$ is the following polynomial:
\begin{align}\label{equ1}
\forall x\in\mathbb{F}_d,\quad p(x)=\frac{(x+1)^d-x^d-1}{d}.
\end{align}

For $d\ge5$, Pickett and Vinatier (\cite{PV}) have proved that under the action of $\Gamma=\langle\iota,\theta\rangle$ (a group generated by two 
elements of $GL_2(\mathbb{F}_d)$, where $\iota(x,y)=(y,x)$ and $\theta(x,y)=(y-x,-x)$ for $(x,y)\in{\mathbb{F}_d}^2$), this arrangement $\mathscr{L}$ 
always has two orbits of cardinal $3$: $\{L_0,L_{-1},L_\infty\}$ and $\{L_1,L_{\frac{d-1}{2}},L_{-2}\}$. They have also stated that:
\begin{enumerate}[label=$\bullet$]
 \item if $d \equiv 1 \mod 3$, there are one orbit of cardinal $2$, $\{L_\omega,L_{\omega^2}\}$, where $\omega$ is a primitive cubic root of unity in 
 $\mathbb{F}_d$ and $\frac{d-7}{6}$ orbits of cardinal $6$;
 \item if $d \equiv 2 \mod 3$, there are $\frac{d-5}{6}$ orbits of cardinal $6$.
\end{enumerate}

The Comp\_lib $1.1$ package have been implemented in Python $3.4$. It provides the complexity $C_d$ of the basis (by counting all but the simple points 
in the associated minimal Besicovitch arrangement) and it also enables to determine the points multiplicities distribution in ${\mathbb{F}_d}^2$ of this 
arrangement. It is available at \url{https://pypi.python.org/pypi/Comp_lib/1.1}. Table \ref{Comp} in Appendix gathers the first $370$ values of $C_d$.

\section{Complexity \textit{versus} number of all but simple points in randomly selected arrangements}

Let us denote by $A_d$ the expected value of the number of all but the simple points in a randomly chosen minimal Besicovitch arrangement in 
${\mathbb{F}_d}^2$. Thanks to the proof of Theorem 1. in \cite{ste}, we have:
\begin{align*}
A_d&=d^2-d(d+1)(1-\frac{1}{d})^d\\
&=(1-\frac{1}{e})d^2-\frac{1}{2e}d+O(1),\quad\text{as $d\rightarrow\infty$}.
\end{align*}

Figure \ref{Fig} shows the values of $\frac{C_d-A_d}{d}$ for the first $370$ prime numbers.

\begin{figure}[ht]
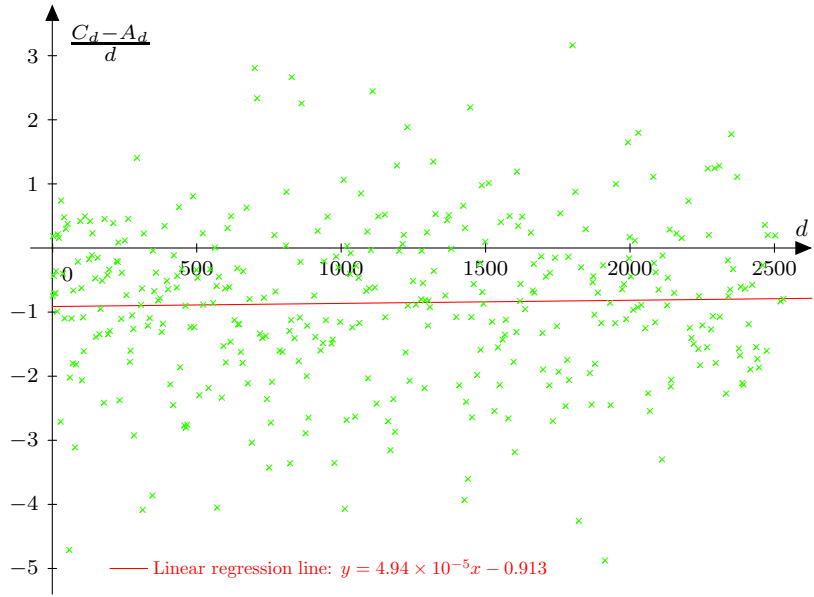

\centering
\definecolor{ffqqqq}{rgb}{1.,0.,0.}
\definecolor{ttffqq}{rgb}{0.2,1.,0.}
% [inline block 0: 1 envs, 43501 chars -> data_tex | \begin{tikzpicture}[line cap=round,line join=round,>=triangle 45,x=0.0038cm,y=0.85cm] \draw[->,color=black] (-75,0.) -- ...]

\caption{The $370$ values of the function that relates each prime number $d$ to $\frac{C_d-A_d}{d}$.}
\label{Fig}
\end{figure}

\subsection{A first test}

From the $370$ values of Figure \ref{Fig}, we plot the regression line: its slope $s$ is approximately $4.94\times10^{-5}$ and its intercept is 
approximately $-0.913$. 

Let us consider the following null hypothesis $H_0$: $s=0$. We have to calculate $T=\frac{s-0}{\hat\sigma_s}$, where $\hat\sigma_s$ is the 
estimated standard deviation of the slope. We obtain $\hat\sigma_s\approx 8.74\times10^{-5}$ and $T\approx 0.565$. $T$ follows a student's 
t-distribution with $(370-2)$ degrees of freedom (see \cite[Proposition 1.8]{cor}). The acceptance region of the hypothesis test with a $5\%$ risk is 
approximately $[-1.967,1.967]$. Thus it can be concluded that we cannot reject the null hypothesis : the fact that the slope is not 
significantly different from zero can not be rejected.

\subsection{A second test}

Figure \ref{Fig4} below shows the distribution of the values of $\frac{C_d-A_d}{d}$ for the first $370$ prime numbers. In regard to the resulting 
histogram, one may wonder whether these values are normally distributed or not.

\begin{figure}[ht]
\centering
\definecolor{qqffqq}{rgb}{0.,1.,0.}
\begin{tikzpicture}[line cap=round,line join=round,>=triangle 45,x=1cm,y=0.15cm]
\draw[->,color=black] (-5.26744744440467,0.) -- (3.6,0.);
\foreach \x in {-5,-4,-3,-2,-1,1,2,3}\draw[shift={(\x,0)},color=black] (0pt,2pt) -- (0pt,-2pt) node[below] {\footnotesize $\x$};
\draw[->,color=black] (0.,-1.2257230782525184) -- (0.,39.48401818184884);
\foreach \y in {5,10,15,20,25,30,35}\draw[shift={(0,\y)},color=black] (2pt,0pt) -- (-2pt,0pt) node[left] {\footnotesize $\y$};
\draw[color=black] (-3pt,-8.3pt) node {\footnotesize $0$};
\clip(-5.26744744440467,-1.2257230782525184) rectangle (3.6,39.48401818184884);
\fill[color=qqffqq,fill=qqffqq,fill opacity=0.5] (-5.,0.) -- (3.25,0.) -- (3.25,1.) -- (2.5,1.) -- (2.5,3.) -- (2.25,3.) -- (2.25,1.) -- (2.,1.) -- (2.,3.) -- (1.75,3.) -- (1.75,1.) -- (1.5,1.) -- (1.5,4.) -- (1.25,4.) -- (1.25,7.) -- (1.,7.) -- (1.,6.) -- (0.75,6.) -- (0.75,9.) -- (0.5,9.) -- (0.5,25.) -- (0.25,25.) -- (0.25,24.) -- (0.,24.) -- (0.,25.) -- (-0.25,25.) -- (-0.25,31.) -- (-0.5,31.) -- (-0.5,39.) -- (-0.75,39.) -- (-0.75,27.) -- (-1.,27.) -- (-1.,37.) -- (-1.25,37.) -- (-1.25,24.) -- (-1.5,24.) -- (-1.5,18.) -- (-2.,18.) -- (-2.,17.) -- (-2.25,17.) -- (-2.25,14.) -- (-2.5,14.) -- (-2.5,11.) -- (-2.75,11.) -- (-2.75,6.) -- (-3.,6.) -- (-3.,4.) -- (-3.5,4.) -- (-3.5,1.) -- (-3.75,1.) -- (-3.75,2.) -- (-4.,2.) -- (-4.,3.) -- (-4.25,3.) -- (-4.25,1.) -- (-5.,1.) -- cycle;
\draw (2.9,6) node[scale=1.2] {$\frac{C_d-A_d}{d}$};\draw (1.1,36) node {$Frequency$};\draw (3.,1.)-- (3.,0.);\draw (3.25,1.)-- (3.25,0.);
\draw (-0.75,39.)-- (-0.5,39.);\draw (-1.25,37.)-- (-1.,37.);\draw (-0.5,31.)-- (-0.25,31.);\draw (-1.,27.)-- (-0.75,27.);\draw (-0.25,25.)-- (0.,25.);
\draw (0.,24.)-- (0.25,24.);\draw (0.25,25.)-- (0.5,25.);\draw (-1.5,24.)-- (-1.25,24.);\draw (-2.,18.)-- (-1.5,18.);\draw (-2.,17.)-- (-2.25,17.);
\draw (-2.25,14.)-- (-2.5,14.);\draw (-2.5,11.)-- (-2.75,11.);\draw (-2.75,6.)-- (-3.,6.);\draw (-3.,4.)-- (-3.5,4.);\draw (-3.5,1.)-- (-3.75,1.);
\draw (-3.75,2.)-- (-4.,2.);\draw (-4.,3.)-- (-4.25,3.);\draw (-4.25,1.)-- (-5.,1.);\draw (3.25,1.)-- (2.5,1.);\draw (2.5,3.)-- (2.25,3.);
\draw (2.,3.)-- (1.75,3.);\draw (2.25,1.)-- (2.,1.);\draw (1.75,1.)-- (1.5,1.);\draw (1.5,4.)-- (1.25,4.);\draw (1.25,7.)-- (1.,7.);
\draw (1.,6.)-- (0.75,6.);\draw (0.75,9.)-- (0.5,9.);\draw (-5.,1.)-- (-5.,0.);\draw (-4.75,1.)-- (-4.75,0.);\draw (-4.5,1.)-- (-4.5,0.);
\draw (-4.25,3.)-- (-4.25,0.);\draw (-4.,3.)-- (-4.,0.);\draw (-3.75,2.)-- (-3.75,0.);\draw (-3.5,4.)-- (-3.5,0.);\draw (-3.25,4.)-- (-3.25,0.);
\draw (-3.,6.)-- (-3.,0.);\draw (-2.75,11.)-- (-2.75,0.);\draw (-2.5,14.)-- (-2.5,0.);\draw (-2.25,17.)-- (-2.25,0.);\draw (-2.,18.)-- (-2.,0.);
\draw (-1.75,18.)-- (-1.75,0.);\draw (-1.5,18.)-- (-1.5,24.);\draw (-1.5,18.)-- (-1.5,0.);\draw (-1.25,37.)-- (-1.25,0.);\draw (-1.,37.)-- (-1.,0.);
\draw (-0.75,39.)-- (-0.75,0.);\draw (-0.5,39.)-- (-0.5,0.);\draw (-0.25,31.)-- (-0.25,0.);\draw (0.,25.)-- (0.,0.);\draw (0.25,25.)-- (0.25,0.);
\draw (0.5,25.)-- (0.5,0.);\draw (0.75,9.)-- (0.75,0.);\draw (1.,7.)-- (1.,0.);\draw (1.25,7.)-- (1.25,0.);\draw (1.5,4.)-- (1.5,0.);
\draw (1.75,3.)-- (1.75,0.);\draw (2.,3.)-- (2.,0.);\draw (2.25,3.)-- (2.25,0.);\draw (2.5,3.)-- (2.5,0.);\draw (2.75,1.)-- (2.75,0.);
\end{tikzpicture}
\caption{Distribution of the values of $\frac{C_d-A_d}{d}$.}
\label{Fig4}
\end{figure}
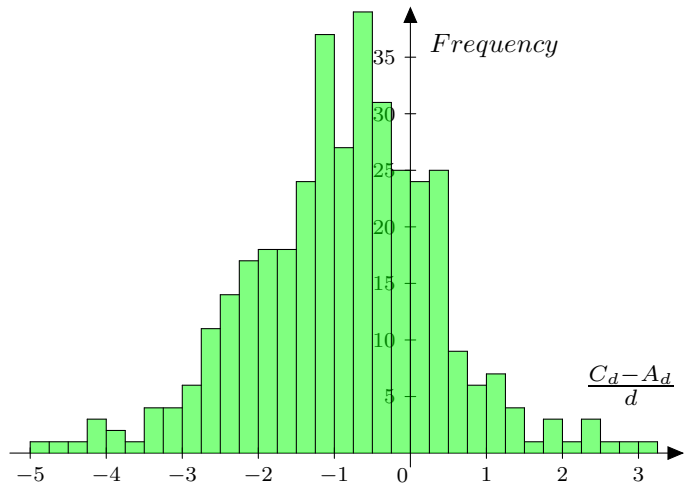

From the result of the first test, we would consider in this part that the function that maps $d$ onto $\frac{C_d-A_d}{d}$ behaves like a 
random variable with an expected value $\Lambda$ close to $-0.856$. On that assumption we will verify whether the values of $\frac{C_d-A_d}{d}$ are 
normally distributed for $d\in[2,2531]\cap\mathbb{N}$ (the null hypothesis) or not. For this purpose we use the Shapiro–Wilk test (see \cite{Sha}). The 
test statistic $W$ is about $0.991$. The associated p-value being about $0.0296$, it can be concluded that we can reject the null hypothesis, 
\textit{i.e.} the values of $\frac{C_d-A_d}{d}$ are significantly not normally distributed for $d\in[2,2531]\cap\mathbb{N}$.

\subsection{A third set of tests}

Once more, from the result of the first test, we would consider in this part that the function that maps $d$ onto $\frac{C_d-A_d}{d}$ behaves like a 
random variable with an expected value $\Lambda$ close to $-0.856$ and with a symmetric probability distribution. 

On that assumption we will verify whether the values higher than $\Lambda$ and those smaller than $\Lambda$ are randomly scattered over the ordered 
absolute values of $\frac{C_d-A_d}{d}$ (the null hypothesis) or not. To this end we use a non-parametric test, the Mann–Whitney $U$ test: we determine 
the ranks of $|\frac{C_d-A_d}{d}|$ for each $d$ in the considered interval (see \cite{W} or \cite{MW}). The ranks sum of the values higher than 
$\Lambda$ is approximately normally distributed. The value of $U_1$ is about $-0.911$. The acceptance region of the hypothesis test with a $5\%$ risk 
being approximately $[-1.960,1.960]$, it can be concluded that we cannot reject the null hypothesis, \textit{i.e.} the fact that the greater and smaller 
than $\Lambda$ values of $\frac{C_d-A_d}{d}$ for $d\in[2,2531]\cap\mathbb{N}$ are randomly scattered: the symmetry of the probability distribution of 
our potential pseudorandom variable can not be rejected.

Once more, on our first assumption, we will verify whether the values higher than $\Lambda$ and those smaller than $\Lambda$ are randomly scattered over 
the first $370$ prime numbers (the null hypothesis) or not. To this end we use the same test, the Mann–Whitney $U$ test. The prime number ranks sum of 
the values higher than $\Lambda$ is approximately normally distributed. The value of $U_2$ is about $-0.397$. It can be concluded that we cannot reject 
the null hypothesis, \textit{i.e.} the fact that the greater and smaller than $\Lambda$ values of $\frac{C_d-A_d}{d}$ for $d\in[2,2531]\cap\mathbb{N}$ 
are randomly scattered over the first $370$ prime numbers.

\subsection{Perspective}

Both first test and set of tests could not invalidate the fact that the function that maps $d$ onto $\frac{C_d-A_d}{d}$ seem to behave like a 
random variable with $\Lambda$ as expected value. If we succeed in proving such a statement, we could consider the following unbiased estimator of $C_d$, 
denoted by $\widehat{C_d}$:
\begin{align*}
\widehat{C_d}&=A_d+\Lambda d\\
&=d^2-d(d+1)(1-\frac{1}{d})^d+\Lambda d\\
&=(1-\frac{1}{e})d^2+(\Lambda-\frac{1}{2e})d + o(d),\quad\text{as $d\rightarrow\infty$}, 
\end{align*}
thanks to the proof of \cite[Theorem 1.]{ste}.

\section{Complexity \textit{versus} number of all but simple points in particular arrangements}

\subsection{Further details on the minimal Besicovitch arrangement providing the complexity}\label{ss1}

In this part, we will consider particular minimal Besicovitch arrangements which share several geometrical properties with 
the arrangements considered to determine the complexity and we will compare the expected values of the number of all but simple points in such 
randomly selected arrangements with $C_d$ values. 

Before reviewing the whole cycles highlighted in section \ref{s1}, let us make a quick remark:

\begin{remark}\label{r1}
If a line in an orbit passes through $(0,0)\in{\mathbb{F}_d}^2$ all the other lines of this orbit also pass through this point, the elements of the 
group $\Gamma$ acting on the lines being in $GL_2(\mathbb{F}_d)$.
\end{remark}

In section \ref{s1} two cases appear, for $d\ge 5$: the cases where $d \equiv 1 \mod 3$ and those where $d \equiv 2 \mod 3$. 

In both cases, the intercepts of the lines in $\{L_0,L_{-1},L_\infty\}$ are $0$ (we have $p(0)=0$ thanks to equality \ref{equ1}, Remark \ref{r1} 
allowing us to conclude). 

The intercepts of the lines in $\{L_1,L_{\frac{d-1}{2}},L_{-2}\}$ are non zero values, except for $d=1093$, the first Wieferich prime number, for which 
lines intercepts are all zero: $p(2)=0\enspace\Longleftrightarrow\enspace\frac{2^{d-1}-1}{d}$ (see equality \ref{equ1}, Remark \ref{r1} and 
\cite{Dorais-Klyve}).

If $d\equiv 1 \mod 3$, the intercepts of the lines in $\{L_\omega,L_{\omega^2}\}$ are $0$: 
\begin{align*}
p(\omega)=&\frac{(\omega+1)^{d}-\omega^d-1}{d}=\frac{-(\omega^{d})^2-\omega^d-1}{d}\\
&=-\frac{-(\omega)^2-\omega-1}{d}=0,
\end{align*}
using the fact that $\omega$ is a primitive cubic root of unity in $\mathbb{F}_d$ and using Fermat's little theorem.

In this part, we will only consider the values of $d\in[2,2531]\cap\mathbb{N}$ where all lines in the $6$-cycles do not pass through $(0,0)$; 
for the $152$ values of $d$ verifying this constraint and also $d\equiv 1 \mod 3$, we denote by $M^*_{d}$ the expected value of the number of all but 
the simple points in a randomly chosen arrangement sharing geometrical properties with the arrangement providing the complexity; for the $153$ values 
of $d$ verifying the same constraint and also $d\equiv 2 \mod 3$, we denote by $M^{**}_{d}$ the similar expected value. Table \ref{Comp} shows the 
values of $d$ being in either the first or the second case.

\subsection{Lines intersections of the different cycles}

The five functions in $\Gamma$, other than the identity function $Id$, will be denote as in \cite{PV}: 
\begin{align*}
&\forall (x,y)\in{\mathbb{F}_d}^2,\\
\iota(x,y)&=(y,x)\qquad\theta(x,y)=(y-x,-x)\qquad\theta^2(x,y)=(-y,x-y)\\
\kappa(x,y)&=\theta\circ\iota(x,y)=(x-y,-y)\qquad\lambda(x,y)=\iota\circ\theta(x,y)=(-x,y-x).
\end{align*}

Note that $\iota$, $\kappa$ and $\lambda$ are of order $2$, and $\theta$ and $\theta^2$ are of order $3$. We can also easily verify that the fixed 
points of $\iota$ are those of the line $L_\infty$, the fixed points of $\kappa$ are those of the line $L_0$ and the fixed points of $\lambda$ are those 
of the line $L_{-1}$. The following proposition can thus be enunciated:

\begin{proposition}\label{prop1}
$\forall \gamma\in\{\iota,\kappa,\lambda\}$ and $\forall a\in\mathbb{F}_d\setminus\{0,-1\}$, if $L_a$ and $\gamma(L_a)$ are two distinct lines, then 
their intersection point is in line of the fixed points of $\gamma$.
\end{proposition}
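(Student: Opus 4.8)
The plan is to use the single structural fact that each $\gamma\in\{\iota,\kappa,\lambda\}$ is an involution, that is $\gamma^2=Id$, together with the identifications of the fixed-point sets recalled just above the statement: the fixed points of $\iota$, $\kappa$ and $\lambda$ are exactly the lines $L_\infty$, $L_0$ and $L_{-1}$ respectively. Since $\gamma\in GL_2(\mathbb{F}_d)$ it is a bijection of ${\mathbb{F}_d}^2$ that carries lines to lines; in particular it sends $L_a$ to $\gamma(L_a)$ and, because $\gamma^2=Id$, it sends $\gamma(L_a)$ back to $L_a$. Thus $\gamma$ simply interchanges the two lines $L_a$ and $\gamma(L_a)$, and this is the only property of $\gamma$ I will need.

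First I would let $P$ be the intersection point of $L_a$ and $\gamma(L_a)$. Applying the bijection $\gamma$ to the membership $P\in L_a\cap\gamma(L_a)$ and using $\gamma(\gamma(L_a))=L_a$ gives $\gamma(P)\in\gamma(L_a)\cap L_a$, so $\gamma(P)$ also lies in the intersection. But two distinct lines of the affine plane ${\mathbb{F}_d}^2$ share at most one point, hence $L_a\cap\gamma(L_a)=\{P\}$ and therefore $\gamma(P)=P$. So $P$ is a fixed point of $\gamma$, and by the identification above it lies on $L_\infty$, $L_0$ or $L_{-1}$ according as $\gamma$ equals $\iota$, $\kappa$ or $\lambda$, which is exactly the asserted conclusion.

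The only delicate point, and the one I expect to be the main obstacle, is to guarantee that the \emph{intersection point} is genuinely defined, i.e. that two distinct lines $L_a$ and $\gamma(L_a)$ can never be parallel. Here I would use that $\gamma$, being an order-$2$ element of $GL_2(\mathbb{F}_d)$ with $d$ odd and a one-dimensional fixed space, is diagonalizable with eigenvalues $\pm1$; consequently $\gamma$ sends a line to a parallel line exactly when the direction of that line is one of its two eigendirections. Reading the direction of $L_a$ off its equation as $(a+1,a)$ and comparing it with the eigendirections of each $\gamma$, a short computation shows that the only arrangement lines whose direction is an eigendirection are those that $\gamma$ already fixes setwise (namely $L_{\frac{d-1}{2}}$ for $\iota$, $L_{-2}$ for $\kappa$, $L_1$ for $\lambda$, together with the respective mirror lines $L_\infty$, $L_0$, $L_{-1}$). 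For every such line $\gamma(L_a)=L_a$, so the hypothesis that $L_a$ and $\gamma(L_a)$ be distinct already rules out every parallel configuration; distinctness then forces a single honest intersection point and the involution argument of the previous paragraph applies verbatim. The restriction $a\in\mathbb{F}_d\setminus\{0,-1\}$ is not needed for the core reasoning: it only discards the lines $L_0$ and $L_{-1}$ of the orbit through the origin, on which $\kappa$ and $\lambda$ act trivially.
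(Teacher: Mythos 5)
Your proof is correct and takes the same route as the paper's: apply the involution $\gamma$ to the intersection point, use $\gamma^2=Id$ together with the fact that two distinct lines share at most one point to conclude $\gamma(P)=P$, and identify the fixed-point sets of $\iota,\kappa,\lambda$ with $L_\infty,L_0,L_{-1}$. Your third-paragraph check that distinct $L_a$ and $\gamma(L_a)$ can never be parallel is a correct addition not present in the paper (whose proof simply treats ``distinct'' as equivalent to ``intersecting in a point''), and it can be streamlined: $\Gamma$ permutes the arrangement $\mathscr{L}$, which, being a minimal Besicovitch arrangement, contains exactly one line in each direction, so $\gamma(L_a)$ parallel to $L_a$ already forces $\gamma(L_a)=L_a$ without any eigendirection computation.
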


\begin{proof}
The image of a point under a fonction in $\Gamma\subset GL_2(\mathbb{F}_d)$ is a point. So, $\forall \gamma\in\{\iota,\kappa,\lambda\}$ and $\forall 
a\in\mathbb{F}_d\setminus\{0,-1\}$, if $L_a$ and $\gamma(L_a)$ are two distinct lines, \textit{i.e.} if their intersection is a point: 
\begin{align*}
\gamma(L_a\cap\gamma(L_a))&=\gamma(L_a)\cap\gamma(\gamma(L_a))\\
&=L_a\cap\gamma(L_a),
\end{align*}
each of the considered functions being of order $2$. The point $L_a\cap\gamma(L_a)$ is thus in the fixed line of $\gamma$.
\end{proof}

Let us henceforth denote by $\mathscr{T}$ the set ${\mathbb{F}_d}^2\setminus\{L_0,L_{-1},L_\infty\}$. In each $6$-cycle, for all $\gamma\in\Gamma$ and 
for all $a\in\mathbb{F}_d$ (such that $L_a$ is in the considered $6$-cycle), $L_a$ and $\gamma(L_a)$ are distinct; we can therefore apply Proposition 
\ref{prop1}: in the case where all the lines in a $6$-cycle do not pass through $(0,0)$ (the prevalent selected case in subsection \ref{ss1}), there 
exist $3$ intersection points of the $6$-cycle lines on each line of $\{L_0,L_{-1},L_\infty\}$:
\begin{enumerate}[label=$\bullet$]
 \item on $L_0$: $L_a\cap \kappa(L_a)$, $\theta(L_a)\cap\lambda(L_a)$ and $\theta^2(L_a)\cap\iota(L_a)$;
 \item on $L_{-1}$: $L_a\cap \lambda(L_a)$, $\theta(L_a)\cap\iota(L_a)$ and $\theta^2(L_a)\cap\kappa(L_a)$;
 \item on $L_\infty$: $L_a\cap \iota(L_a)$, $\theta(L_a)\cap\kappa(L_a)$ and $\theta^2(L_a)\cap\lambda(L_a)$.
\end{enumerate}

An other proposition can be added:
\begin{proposition}\label{prop2}
In the case where all the lines in a $6$-cycle do not pass through $(0,0)$, two of the described above $6$-cycle intersection points on a line of 
$\{L_0,L_{-1},L_\infty\}$ do not coincide.
\end{proposition}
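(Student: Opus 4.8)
The plan is to collapse the three-fold assertion (one statement for each of $L_0$, $L_{-1}$, $L_\infty$) to a single computation on $L_0$, and then to exhibit the three intersection points explicitly. First I would record that the three fixed lines are $L_0:\ y=0$, $L_{-1}:\ x=0$ and $L_\infty:\ x=y$: this follows from the fixed-point descriptions of $\kappa$, $\lambda$, $\iota$ recalled before Proposition \ref{prop1}, together with the vanishings $p(0)=p(-1)=0$ obtained from equality \ref{equ1}. Next, since $\theta\in\Gamma\subset GL_2(\mathbb{F}_d)$ preserves the considered $6$-cycle and sends $L_0\mapsto L_\infty\mapsto L_{-1}\mapsto L_0$ (a one-line check on a generic point of each line), it carries the three $6$-cycle intersection points lying on $L_0$ bijectively onto the three lying on $L_\infty$, and those onto the three lying on $L_{-1}$. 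As $\theta$ is a bijection of ${\mathbb{F}_d}^2$, two of these points coincide on one fixed line if and only if their images coincide on the next; hence it suffices to treat $L_0$.

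On $L_0$ I would write down the equations of the six lines of the cycle via $\gamma(L_a)=\{P:\gamma^{-1}(P)\in L_a\}$. A direct substitution yields, for instance, $\kappa(L_a):\ ax+y-p(a)=0$, $\iota(L_a):\ (a+1)x-ay+p(a)=0$, $\theta(L_a):\ (a+1)x-y+p(a)=0$, $\lambda(L_a):\ x-(a+1)y-p(a)=0$ and $\theta^2(L_a):\ x+ay-p(a)=0$. Intersecting each with $y=0$ shows that the three intersection points located on $L_0$ by Proposition \ref{prop1} have respective abscissas $p(a)/a$, $-p(a)/(a+1)$ and $p(a)$. Note that the divisions make sense because $a\neq0$ and $a\neq-1$ for every label $a$ occurring in a $6$-cycle.

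It then remains to see that these three abscissas are pairwise distinct. Under the standing hypothesis of subsection \ref{ss1} the lines of the $6$-cycle avoid $(0,0)$, which by the equation of $L_a$ is exactly the condition $p(a)\neq0$; we may therefore divide out the common factor $p(a)$. The equalities $p(a)/a=p(a)$, $p(a)/a=-p(a)/(a+1)$ and $-p(a)/(a+1)=p(a)$ then reduce respectively to $a=1$, $2a+1=0$ and $a=-2$, that is, to $a\in\{1,\tfrac{d-1}{2},-2\}$. But these are precisely the labels of the $3$-cycle $\{L_1,L_{\frac{d-1}{2}},L_{-2}\}$ of Section \ref{s1}, hence are excluded as soon as $L_a$ lies in a $6$-cycle; and for $d\ge5$ the three values $1,\tfrac{d-1}{2},-2$ are distinct, so no two of the equalities could hold simultaneously. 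Consequently the three points on $L_0$ are pairwise distinct, and \emph{a fortiori} two of them do not coincide.

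Finally the conclusion transfers verbatim to $L_{-1}$ and $L_\infty$ through the bijection $\theta$ of the first step. The one point requiring care is the bookkeeping: one must apply each $\gamma$ to $L_a$ through $\gamma^{-1}$ and must be sure that the excluded labels $\{1,\tfrac{d-1}{2},-2\}$ are genuinely disjoint from the labels arising in a $6$-cycle, which is guaranteed by the orbit decomposition of $\mathscr{L}$ recalled in Section \ref{s1}. Everything else is a short linear computation in $\mathbb{F}_d$.
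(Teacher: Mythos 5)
Your proof is correct, but it takes a genuinely different route from the paper's. The paper argues by a short, coordinate-free contradiction: assuming two of the listed points on $L_0$ coincide, the common point lies on four lines of the cycle, and pushing it forward by $\theta$ while using $\theta(L_0)=L_\infty$ forces the single point $\theta(L_a)\cap\lambda(L_a)$ to lie on both $L_0$ and $L_\infty$, hence to equal $(0,0)$, contradicting the hypothesis that the cycle avoids the origin; the remaining pairs are ``demonstrated in the same way.'' You instead compute the six line equations $\gamma(L_a)$ explicitly, intersect with $y=0$ to get the abscissas $p(a)/a$, $-p(a)/(a+1)$ and $p(a)$ (your equations check out, and the pairing of the six lines into three points on $L_0$ is exactly as you find it), and you use the cycle $\theta\colon L_0\mapsto L_\infty\mapsto L_{-1}$ only as a reduction to $L_0$, where the paper uses that same cycle as the engine of its contradiction. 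Your exclusion of the coincidence labels is also sound: the equalities force either $p(a)=0$, i.e.\ all cycle lines through the origin (excluded by hypothesis, consistently with Remark \ref{r1}), or $a\in\{1,\frac{d-1}{2},-2\}$, which is precisely the $3$-orbit of Section \ref{s1} and hence disjoint from every $6$-orbit. What your route buys is quantitative information the paper's does not: the exact locations of the three points on each fixed line and a characterization of exactly when degeneracy occurs, which in passing explains the collapsed configuration of $\{L_1,L_{\frac{d-1}{2}},L_{-2}\}$ treated later as a degenerate $6$-cycle; what the paper's route buys is brevity and uniformity across all pairs. One bookkeeping remark so you do not doubt your own calculation: with your direct-image convention the pair meeting on $L_0$ is $\theta(L_a)\cap\iota(L_a)$, whereas the paper's bullet list places $\theta(L_a)\cap\lambda(L_a)$ there; the two listings differ by the substitution $\theta\leftrightarrow\theta^2$ (equivalently, by reading $\theta(L_a)$ as an inverse image), and since the set of three points on each fixed line is the same under either convention, the discrepancy is harmless for the proposition.
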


\begin{proof}
Let us consider a $6$-cycle. Its lines do not pass through the origin. $a\in\mathbb{F}_d$, such that $L_a$ is in this $6$-cycle. 
We assume that $L_a\cap \kappa(L_a)$ and $\theta(L_a)\cap\lambda(L_a)$ coincide on $L_0$. Knowing that $\theta(L_0)=L_\infty$ (see \cite{PV}), we have: 
\begin{align*}
&\theta(L_a\cap \kappa(L_a)\cap\theta(L_a)\cap\lambda(L_a))\in L_\infty\\
&\theta(L_a)\cap \lambda(L_a)\cap\theta^2(L_a)\cap\iota(L_a)\in L_\infty.
\end{align*}
So $\theta(L_a)\cap\lambda\in L_0\cap L_\infty={(0,0)}$; it contradicts the hypothesis of the proposition. The considered points do not coincide.\\
All the other cases can be demonstrated in the same way.
\end{proof}

Thus the remaining $6$ intersection points of the $6$-cycle lines are in $\mathscr{T}$. We can finally prove the following proposition (in the case 
where $d\ge11$, otherwise there is no $6$-cycle in the arrangement $\mathscr{L}$):

\begin{proposition}\label{prop3}
The $6$ remaining point in $\mathscr{T}$ (in the case where all the lines in the $6$-cycle do not pass through $(0,0)$) are distinct. 
\end{proposition}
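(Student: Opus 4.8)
The plan is to realize the six points as a single $\Gamma$-orbit and to prove that each of them has trivial stabilizer in $\Gamma$, so that orbit--stabilizer forces the orbit to have exactly six elements. First I would identify the six points concretely: they are the intersections $g(L_a)\cap h(L_a)$ of those pairs of $6$-cycle lines for which $hg^{-1}$ is a rotation, i.e. lies in $\{\theta,\theta^2\}$, namely the intersections of the pairs
\[ \{L_a,\theta(L_a)\},\ \{\theta(L_a),\theta^2(L_a)\},\ \{L_a,\theta^2(L_a)\},\ \{\iota(L_a),\kappa(L_a)\},\ \{\kappa(L_a),\lambda(L_a)\},\ \{\iota(L_a),\lambda(L_a)\}; \]
the nine fixed-line points of Proposition \ref{prop2} being exactly those coming from pairs with $hg^{-1}$ a reflection. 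Because conjugation in $\Gamma\cong S_3$ sends rotations to rotations, $\Gamma$ permutes these six points. Checking the generators then shows the action is transitive: $\theta$ cyclically permutes the triple of lines $\{L_a,\theta(L_a),\theta^2(L_a)\}$ and, using $\theta\iota=\kappa$, $\theta\kappa=\lambda$, $\theta\lambda=\iota$, also the triple $\{\iota(L_a),\kappa(L_a),\lambda(L_a)\}$, so $\theta$ acts as two $3$-cycles on the six points, while $\iota$ interchanges the two triples of lines and hence the two corresponding triples of points.

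Next I would compute, for an arbitrary one of the six points $P$, its stabilizer in $\Gamma$. Since $P\in\mathscr{T}$ by the discussion preceding the statement, no reflection fixes $P$: the fixed points of $\iota$, $\kappa$, $\lambda$ are exactly the points of $L_\infty$, $L_0$, $L_{-1}$ respectively, none of which meet $\mathscr{T}$. Nor does a rotation fix $P$: the only fixed point of $\theta$ (equivalently of $\theta^2$) is the origin, because $\det(\theta-Id)=3\neq0$ for $d\ge 11$, and the origin lies on $L_0$, so $(0,0)\notin\mathscr{T}$ whereas $P\in\mathscr{T}$. As every nontrivial element of $\Gamma\cong S_3$ is a reflection or a rotation, $\mathrm{Stab}_\Gamma(P)=\{Id\}$.

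Orbit--stabilizer then gives $|\Gamma\cdot P|=|\Gamma|/|\mathrm{Stab}_\Gamma(P)|=6$, so the single orbit consists of six distinct points, which is the claim. The only step carrying real content is the stabilizer computation, and within it the rotation case: everything reduces to the two facts that the six points avoid the three fixed lines $L_0,L_{-1},L_\infty$ (already established, which simultaneously disposes of the reflection case and gives $P\neq(0,0)$) and that $\theta$ fixes only the origin. I expect the mild bookkeeping of the generator actions, needed only to certify transitivity, to be the most error-prone part, but it is routine; the substantive geometric input, that the $6$-cycle lines avoid the origin, is exactly the standing hypothesis of the proposition.
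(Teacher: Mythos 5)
Your proof is correct, and it rests on exactly the two geometric facts the paper's own proof uses --- the six points lie in $\mathscr{T}$, hence off the fixed lines of $\iota,\kappa,\lambda$, and $\theta$ fixes only the origin for $d\neq 3$ (your $\det(\theta-Id)=3$ is the same computation as the paper's ``$3x=0$ and $y=-x$'' in Lemma \ref{l1}) --- but you organize them differently. The paper argues by contradiction: a coincidence among the six points forces three lines of the $6$-cycle to be concurrent at some $P\in\mathscr{T}$, the triple being necessarily $\{L_a,\theta(L_a),\theta^2(L_a)\}$ or $\{\iota(L_a),\kappa(L_a),\lambda(L_a)\}$; since $\theta$ permutes either triple cyclically, $\theta(P)=P$, so $P=(0,0)$ by the lemma, a contradiction. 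You instead check that the six points form a single $\Gamma$-orbit (your identities $\theta\iota=\kappa$, $\theta\kappa=\lambda$, $\theta\lambda=\iota$ all hold, and with $P=L_a\cap\theta(L_a)$ the map $\gamma\mapsto\gamma(P)$ hits each listed point exactly once) and that the stabilizer of a point is trivial, then conclude by orbit--stabilizer. The two arguments are dual formulations of the same mechanism: a rotation in your stabilizer is precisely the paper's $\theta(P)=P$ step, while a reflection in the stabilizer corresponds to the ``mixed'' coincidences between the two triples, which the paper excludes only implicitly (``it is clear from the foregoing''). Your version buys a uniform treatment of every coincidence pattern, making explicit what the paper leaves to the reader, at the cost of the transitivity bookkeeping; the paper's is shorter because, once the concurrency reduction is granted, one application of $\theta$ settles both cases. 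A side benefit of your invariant criterion --- pairs $\{g(L_a),h(L_a)\}$ with $hg^{-1}$ a reflection meet on the fixed line of $hg^{-1}$, rotation pairs meet in $\mathscr{T}$ --- is that it assigns each of the nine boundary points to the correct fixed line: for instance it places $\theta(L_a)\cap\lambda(L_a)$ on $L_\infty$, since $\lambda\theta^{-1}=\iota$, rather than on $L_0$ as in the paper's enumeration (a harmless slip there, as only the existence of three distinct intersection points on each of $L_0,L_{-1},L_\infty$ is ever used).
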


\begin{proof}
We first prove the following lemma:
\begin{lemma}\label{l1}
$\theta$ has a single fixed point in ${\mathbb{F}_d}^2\quad\Longleftrightarrow\quad d\neq3$.
\end{lemma}
\begin{proof}
$\theta\in GL_2(\mathbb{F}_d)$ then $(0,0)$ is a fixed point of $\theta$.\\
For $(x,y)\in{\mathbb{F}_d}^2$:
\begin{align*}
\theta(x,y)=(x,y)\quad&\Longleftrightarrow\quad y-x=x\text{ and }-x=y \\
&\Longleftrightarrow\quad 3x=0\text{ and }y=-x.
\end{align*}
The result follows.
\end{proof}
Let us consider a $6$-cycle. Its lines do not pass through the origin. $a\in\mathbb{F}_d$, such that $L_a$ is in this $6$-cycle.\\
Let us assume that $3$ lines in the $6$-cycle are concurrent in $P\in\mathscr{T}$. It is clear from the foregoing that these lines are whether 
$L_a$, $\theta(L_a)$ and $\theta^2(L_a)$ or $\iota(L_a)$, $\kappa(L_a)$ and $\lambda(L_a)$. We have:
\begin{align*}
\theta(P)&=\theta(L_a\cap\theta(L_a)\cap\theta^2(L_a))\quad\big(\text{or}\quad\theta(\iota(L_a)\cap\kappa(L_a)\cap\lambda(L_a))\big)\\
&=\theta(L_a)\cap\theta^2(L_a)\cap L_a\quad\big(\text{or}\quad\kappa(L_a)\cap\lambda(L_a)\cap\iota(L_a)\big).
\end{align*}
In both cases $\theta(P)=P$ \textit{i.e.} $P$ is a fixed point of $\theta$. It means that $P=(0,0)$ thanks to Lemma \ref{l1}, knowing that $d\ge11$; 
it contradicts the hypothesis of the proposition. The $6$ remaining point in $\mathscr{T}$ are distinct.
\end{proof}

The cases of $\{L_1,L_{\frac{d-1}{2}},L_{-2}\}$ and $\{L_\omega,L_{\omega^2}\}$ can be considered as degenerate cases of a $6$-cycle. Let us focus on 
the first arrangement. From \cite{PV}, we get $\iota(L_1)=L_{-2}$, $\lambda(L_{-2})=L_{\frac{d-1}{2}}$ and $\kappa(L_1)=L_{\frac{d-1}{2}}$. Thanks to 
Proposition \ref{prop1}, the $3$ intersection points of lines in $\{L_1,L_{\frac{d-1}{2}},L_{-2}\}$ are:
\begin{enumerate}[label=$\bullet$]
 \item on $L_0$: $L_1\cap L_{\frac{d-1}{2}}$;
 \item on $L_{-1}$: $L_{-2}\cap L_{\frac{d-1}{2}}$;
 \item on $L_\infty$: $L_1\cap L_{-2}$.
\end{enumerate}
We note that this result is just a particular case of the above result.

The Figure \ref{Fig2} below provides two examples of minimal Besicovitch arrangements leading to the determination of the complexity. For the first one 
($d=7$), we are in the case where $d \equiv 1 \mod 3$, for the second one ($d=11$) in the case where $d \equiv 2 \mod 3$. The above results and in 
particular those of Propositions \ref{prop1}, \ref{prop2} and \ref{prop3} are emphasised.

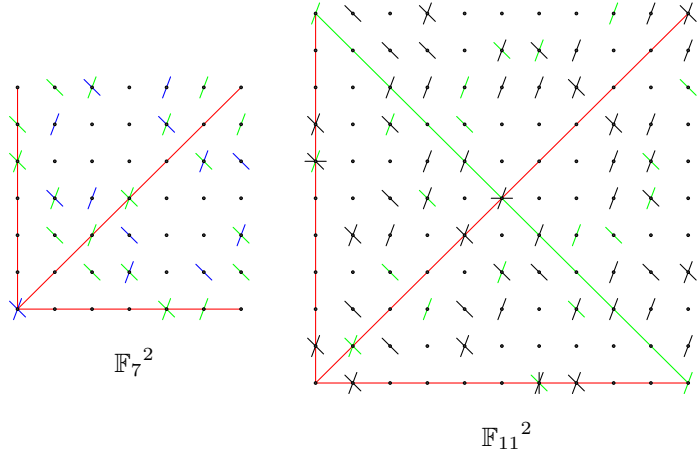
\begin{figure}[ht]
\centering
\definecolor{qqffqq}{rgb}{0.,1.,0.}
\definecolor{qqqqff}{rgb}{0.,0.,1.}
\definecolor{ffqqqq}{rgb}{1.,0.,0.}
\definecolor{uuuuuu}{rgb}{0.26666666666666666,0.26666666666666666,0.26666666666666666}
\begin{tikzpicture}[line cap=round,line join=round,>=triangle 45,x=0.7cm,y=0.7cm]
\clip(-0.872575340805852,-2) rectangle (15.577138254236258,7);
\draw [color=ffqqqq] (0.7,4.9)-- (0.7,0.7);\draw [color=ffqqqq] (0.7,0.7)-- (4.9,0.7);\draw [color=ffqqqq] (0.7,0.7)-- (4.9,4.9);
\draw [color=ffqqqq] (6.3,6.3)-- (6.3,-0.7);\draw [color=ffqqqq] (6.3,-0.7)-- (13.3,-0.7);\draw [color=ffqqqq] (6.3,-0.7)-- (13.3,6.3);
\draw [color=qqqqff] (0.55,0.85)-- (0.85,0.55);\draw [color=qqqqff] (1.25,2.95)-- (1.55,2.65);\draw [color=qqqqff] (1.95,5.05)-- (2.25,4.75);
\draw [color=qqqqff] (2.65,2.25)-- (2.95,1.95);\draw [color=qqqqff] (3.35,4.35)-- (3.65,4.05);\draw [color=qqqqff] (4.05,1.55)-- (4.35,1.25);
\draw [color=qqqqff] (4.75,3.65)-- (5.05,3.35);\draw [color=qqqqff] (0.625,0.5)-- (0.775,0.9);\draw [color=qqqqff] (1.325,4.)-- (1.475,4.4);
\draw [color=qqqqff] (2.025,2.6)-- (2.175,3.);\draw [color=qqqqff] (2.725,1.2)-- (2.875,1.6);\draw [color=qqqqff] (3.425,4.7)-- (3.575,5.1);
\draw [color=qqqqff] (4.125,3.3)-- (4.275,3.7);\draw [color=qqqqff] (4.825,1.9)-- (4.975,2.3);\draw [color=qqffqq] (0.55,4.35)-- (0.85,4.05);
\draw [color=qqffqq] (1.25,2.25)-- (1.55,1.95);\draw [color=qqffqq] (2.025,4.7)-- (2.175,5.1);\draw [color=qqffqq] (2.725,2.6)-- (2.875,3.);
\draw [color=qqffqq] (3.425,0.5)-- (3.575,0.9);\draw [color=qqffqq] (4.05,3.65)-- (4.35,3.35);\draw [color=qqffqq] (4.75,1.55)-- (5.05,1.25);
\draw [color=qqffqq] (0.625,3.3)-- (0.775,3.7);\draw [color=qqffqq] (1.325,2.6)-- (1.475,3.);\draw [color=qqffqq] (2.025,1.9)-- (2.175,2.3);
\draw [color=qqffqq] (2.65,1.55)-- (2.95,1.25);\draw [color=qqffqq] (3.35,0.85)-- (3.65,0.55);\draw [color=qqffqq] (4.125,4.7)-- (4.275,5.1);
\draw [color=qqffqq] (4.825,4.)-- (4.975,4.4);\draw [color=qqffqq] (0.55,3.65)-- (0.85,3.35);\draw [color=qqffqq] (1.25,5.05)-- (1.55,4.75);
\draw [color=qqffqq] (1.95,1.55)-- (2.25,1.25);\draw [color=qqffqq] (2.65,2.95)-- (2.95,2.65);\draw [color=qqffqq] (3.425,4.)-- (3.575,4.4);
\draw [color=qqffqq] (4.125,0.5)-- (4.275,0.9);\draw [color=qqffqq] (4.75,2.25)-- (5.05,1.95);\draw [color=qqffqq] (6.225,3.3)-- (6.375,3.7);
\draw [color=qqffqq] (6.925,-0.2)-- (7.075,0.2);\draw [color=qqffqq] (7.625,4.)-- (7.775,4.4);\draw [color=qqffqq] (9.025,4.7)-- (9.175,5.1);
\draw [color=qqffqq] (10.425,5.4)-- (10.575,5.8);\draw [color=qqffqq] (11.825,6.1)-- (11.975,6.5);\draw [color=qqffqq] (8.325,0.5)-- (8.475,0.9);
\draw [color=qqffqq] (9.725,1.2)-- (9.875,1.6);\draw [color=qqffqq] (11.125,1.9)-- (11.275,2.3);\draw [color=qqffqq] (12.525,2.6)-- (12.675,3.);
\draw [color=qqffqq] (13.225,-0.9)-- (13.375,-0.5);\draw [color=qqffqq] (6.3,6.3)-- (13.3,-0.7);\draw [color=qqffqq] (6.225,6.1)-- (6.375,6.5);
\draw [color=qqffqq] (6.85,0.15)-- (7.15,-0.15);\draw [color=qqffqq] (7.55,1.55)-- (7.85,1.25);\draw [color=qqffqq] (8.25,2.95)-- (8.55,2.65);
\draw [color=qqffqq] (8.95,4.35)-- (9.25,4.05);\draw [color=qqffqq] (9.65,5.75)-- (9.95,5.45);\draw [color=qqffqq] (10.35,-0.55)-- (10.65,-0.85);
\draw [color=qqffqq] (11.05,0.85)-- (11.35,0.55);\draw [color=qqffqq] (11.75,2.25)-- (12.05,1.95);\draw [color=qqffqq] (12.45,3.65)-- (12.75,3.35);
\draw [color=qqffqq] (13.15,5.05)-- (13.45,4.75);\draw (6.15,4.35)-- (6.45,4.05);\draw (6.85,2.25)-- (7.15,1.95);\draw (7.55,0.15)-- (7.85,-0.15);
\draw (8.25,5.75)-- (8.55,5.45);\draw (9.025,3.3)-- (9.175,3.7);\draw (9.65,1.55)-- (9.95,1.25);\draw (10.425,-0.9)-- (10.575,-0.5);
\draw (11.125,4.7)-- (11.275,5.1);\draw (11.825,2.6)-- (11.975,3.);\draw (12.525,0.5)-- (12.675,0.9);\draw (13.225,6.1)-- (13.375,6.5);
\draw (6.15,0.15)-- (6.45,-0.15);\draw (6.85,6.45)-- (7.15,6.15);\draw (7.625,4.7)-- (7.775,5.1);\draw (8.325,3.3)-- (8.475,3.7);
\draw (9.025,1.9)-- (9.175,2.3);\draw (9.725,0.5)-- (9.875,0.9);\draw (10.5,-0.5)-- (10.5,-0.9);\draw (11.125,5.4)-- (11.275,5.8);
\draw (11.825,4.)-- (11.975,4.4);\draw (12.45,2.95)-- (12.75,2.65);\draw (13.15,1.55)-- (13.45,1.25);\draw (6.225,-0.2)-- (6.375,0.2);
\draw (6.925,1.9)-- (7.075,2.3);\draw (7.55,4.35)-- (7.85,4.05);\draw (8.25,6.45)-- (8.55,6.15);\draw (8.95,0.85)-- (9.25,0.55);
\draw (9.725,2.6)-- (9.875,3.);\draw (10.425,4.7)-- (10.575,5.1);\draw (11.125,-0.9)-- (11.275,-0.5);\draw (11.825,1.2)-- (11.975,1.6);
\draw (12.525,3.3)-- (12.675,3.7);\draw (13.225,5.4)-- (13.375,5.8);\draw (6.225,4.)-- (6.375,4.4);\draw (6.925,-0.9)-- (7.075,-0.5);
\draw (7.625,1.9)-- (7.775,2.3);\draw (8.325,4.7)-- (8.475,5.1);\draw (9.025,-0.2)-- (9.175,0.2);\draw (9.6,2.8)-- (10.,2.8);
\draw (10.35,5.75)-- (10.65,5.45);\draw (11.125,0.5)-- (11.275,0.9);\draw (11.825,3.3)-- (11.975,3.7);\draw (12.525,6.1)-- (12.675,6.5);
\draw (13.225,1.2)-- (13.375,1.6);\draw (6.15,3.65)-- (6.45,3.35);\draw (6.85,-0.55)-- (7.15,-0.85);\draw (7.55,2.95)-- (7.85,2.65);
\draw (8.325,6.1)-- (8.475,6.5);\draw (8.95,2.25)-- (9.25,1.95);\draw (10.35,1.55)-- (10.65,1.25);\draw (11.825,0.5)-- (11.975,0.9);
\draw (13.225,-0.2)-- (13.375,0.2);\draw (9.725,5.4)-- (9.875,5.8);\draw (11.05,5.05)-- (11.35,4.75);\draw (12.525,4.)-- (12.675,4.4);
\draw (6.1,3.5)-- (6.5,3.5);\draw (6.85,0.85)-- (7.15,0.55);\draw (7.55,5.75)-- (7.85,5.45);\draw (8.325,2.6)-- (8.475,3.);
\draw (8.95,0.15)-- (9.25,-0.15);\draw (9.725,4.7)-- (9.875,5.1);\draw (10.425,1.9)-- (10.575,2.3);\draw (11.05,-0.55)-- (11.35,-0.85);
\draw (11.75,4.35)-- (12.05,4.05);\draw (12.45,1.55)-- (12.75,1.25);\draw (13.15,6.45)-- (13.45,6.15);
\draw (2.9,-0.3) node {${\mathbb{F}_7}^2$};\draw (9.9,-1.7) node {${\mathbb{F}_{11}}^2$};
\begin{scriptsize}
\draw [fill=uuuuuu] (0.7,0.7) circle (0.5pt);\draw [fill=uuuuuu] (1.4,0.7) circle (0.5pt);
\draw [fill=uuuuuu] (2.1,0.7) circle (0.5pt);\draw [fill=uuuuuu] (2.8,0.7) circle (0.5pt);\draw [fill=uuuuuu] (3.5,0.7) circle (0.5pt);
\draw [fill=uuuuuu] (4.2,0.7) circle (0.5pt);\draw [fill=uuuuuu] (4.9,0.7) circle (0.5pt);\draw [fill=uuuuuu] (0.7,1.4) circle (0.5pt);
\draw [fill=uuuuuu] (1.4,1.4) circle (0.5pt);\draw [fill=uuuuuu] (2.1,1.4) circle (0.5pt);\draw [fill=uuuuuu] (2.8,1.4) circle (0.5pt);
\draw [fill=uuuuuu] (3.5,1.4) circle (0.5pt);\draw [fill=uuuuuu] (4.2,1.4) circle (0.5pt);\draw [fill=uuuuuu] (4.9,1.4) circle (0.5pt);
\draw [fill=uuuuuu] (0.7,2.1) circle (0.5pt);\draw [fill=uuuuuu] (1.4,2.1) circle (0.5pt);\draw [fill=uuuuuu] (2.1,2.1) circle (0.5pt);
\draw [fill=uuuuuu] (2.8,2.1) circle (0.5pt);\draw [fill=uuuuuu] (3.5,2.1) circle (0.5pt);\draw [fill=uuuuuu] (4.2,2.1) circle (0.5pt);
\draw [fill=uuuuuu] (4.9,2.1) circle (0.5pt);\draw [fill=uuuuuu] (0.7,2.8) circle (0.5pt);\draw [fill=uuuuuu] (1.4,2.8) circle (0.5pt);
\draw [fill=uuuuuu] (2.1,2.8) circle (0.5pt);\draw [fill=uuuuuu] (2.8,2.8) circle (0.5pt);\draw [fill=uuuuuu] (3.5,2.8) circle (0.5pt);
\draw [fill=uuuuuu] (4.2,2.8) circle (0.5pt);\draw [fill=uuuuuu] (4.9,2.8) circle (0.5pt);\draw [fill=uuuuuu] (0.7,3.5) circle (0.5pt);
\draw [fill=uuuuuu] (1.4,3.5) circle (0.5pt);\draw [fill=uuuuuu] (2.1,3.5) circle (0.5pt);\draw [fill=uuuuuu] (2.8,3.5) circle (0.5pt);
\draw [fill=uuuuuu] (3.5,3.5) circle (0.5pt);\draw [fill=uuuuuu] (4.2,3.5) circle (0.5pt);\draw [fill=uuuuuu] (4.9,3.5) circle (0.5pt);
\draw [fill=uuuuuu] (0.7,4.2) circle (0.5pt);\draw [fill=uuuuuu] (1.4,4.2) circle (0.5pt);\draw [fill=uuuuuu] (2.1,4.2) circle (0.5pt);
\draw [fill=uuuuuu] (2.8,4.2) circle (0.5pt);\draw [fill=uuuuuu] (3.5,4.2) circle (0.5pt);\draw [fill=uuuuuu] (4.2,4.2) circle (0.5pt);
\draw [fill=uuuuuu] (4.9,4.2) circle (0.5pt);\draw [fill=uuuuuu] (0.7,4.9) circle (0.5pt);\draw [fill=uuuuuu] (1.4,4.9) circle (0.5pt);
\draw [fill=uuuuuu] (2.1,4.9) circle (0.5pt);\draw [fill=uuuuuu] (2.8,4.9) circle (0.5pt);\draw [fill=uuuuuu] (3.5,4.9) circle (0.5pt);
\draw [fill=uuuuuu] (4.2,4.9) circle (0.5pt);\draw [fill=uuuuuu] (4.9,4.9) circle (0.5pt);
\draw [fill=uuuuuu] (6.3,-0.7) circle (0.5pt);\draw [fill=uuuuuu] (7.,-0.7) circle (0.5pt);\draw [fill=uuuuuu] (7.7,-0.7) circle (0.5pt);
\draw [fill=uuuuuu] (8.4,-0.7) circle (0.5pt);\draw [fill=uuuuuu] (9.1,-0.7) circle (0.5pt);\draw [fill=uuuuuu] (9.8,-0.7) circle (0.5pt);
\draw [fill=uuuuuu] (10.5,-0.7) circle (0.5pt);\draw [fill=uuuuuu] (11.2,-0.7) circle (0.5pt);\draw [fill=uuuuuu] (11.9,-0.7) circle (0.5pt);
\draw [fill=uuuuuu] (12.6,-0.7) circle (0.5pt);\draw [fill=uuuuuu] (13.3,-0.7) circle (0.5pt);\draw [fill=uuuuuu] (6.3,0.) circle (0.5pt);
\draw [fill=uuuuuu] (7.,0.) circle (0.5pt);\draw [fill=uuuuuu] (7.7,0.) circle (0.5pt);\draw [fill=uuuuuu] (8.4,0.) circle (0.5pt);
\draw [fill=uuuuuu] (9.1,0.) circle (0.5pt);\draw [fill=uuuuuu] (9.8,0.) circle (0.5pt);\draw [fill=uuuuuu] (10.5,0.) circle (0.5pt);
\draw [fill=uuuuuu] (11.2,0.) circle (0.5pt);\draw [fill=uuuuuu] (11.9,0.) circle (0.5pt);\draw [fill=uuuuuu] (12.6,0.) circle (0.5pt);
\draw [fill=uuuuuu] (13.3,0.) circle (0.5pt);\draw [fill=uuuuuu] (6.3,0.7) circle (0.5pt);\draw [fill=uuuuuu] (7.,0.7) circle (0.5pt);
\draw [fill=uuuuuu] (7.7,0.7) circle (0.5pt);\draw [fill=uuuuuu] (8.4,0.7) circle (0.5pt);\draw [fill=uuuuuu] (9.1,0.7) circle (0.5pt);
\draw [fill=uuuuuu] (9.8,0.7) circle (0.5pt);\draw [fill=uuuuuu] (10.5,0.7) circle (0.5pt);\draw [fill=uuuuuu] (11.2,0.7) circle (0.5pt);
\draw [fill=uuuuuu] (11.9,0.7) circle (0.5pt);\draw [fill=uuuuuu] (12.6,0.7) circle (0.5pt);\draw [fill=uuuuuu] (13.3,0.7) circle (0.5pt);
\draw [fill=uuuuuu] (6.3,1.4) circle (0.5pt);\draw [fill=uuuuuu] (7.,1.4) circle (0.5pt);\draw [fill=uuuuuu] (7.7,1.4) circle (0.5pt);
\draw [fill=uuuuuu] (8.4,1.4) circle (0.5pt);\draw [fill=uuuuuu] (9.1,1.4) circle (0.5pt);\draw [fill=uuuuuu] (9.8,1.4) circle (0.5pt);
\draw [fill=uuuuuu] (10.5,1.4) circle (0.5pt);\draw [fill=uuuuuu] (11.2,1.4) circle (0.5pt);\draw [fill=uuuuuu] (11.9,1.4) circle (0.5pt);
\draw [fill=uuuuuu] (12.6,1.4) circle (0.5pt);\draw [fill=uuuuuu] (13.3,1.4) circle (0.5pt);\draw [fill=uuuuuu] (6.3,2.1) circle (0.5pt);
\draw [fill=uuuuuu] (7.,2.1) circle (0.5pt);\draw [fill=uuuuuu] (7.7,2.1) circle (0.5pt);\draw [fill=uuuuuu] (8.4,2.1) circle (0.5pt);
\draw [fill=uuuuuu] (9.1,2.1) circle (0.5pt);\draw [fill=uuuuuu] (9.8,2.1) circle (0.5pt);\draw [fill=uuuuuu] (10.5,2.1) circle (0.5pt);
\draw [fill=uuuuuu] (11.2,2.1) circle (0.5pt);\draw [fill=uuuuuu] (11.9,2.1) circle (0.5pt);\draw [fill=uuuuuu] (12.6,2.1) circle (0.5pt);
\draw [fill=uuuuuu] (13.3,2.1) circle (0.5pt);\draw [fill=uuuuuu] (6.3,2.8) circle (0.5pt);\draw [fill=uuuuuu] (7.,2.8) circle (0.5pt);
\draw [fill=uuuuuu] (7.7,2.8) circle (0.5pt);\draw [fill=uuuuuu] (8.4,2.8) circle (0.5pt);\draw [fill=uuuuuu] (9.1,2.8) circle (0.5pt);
\draw [fill=uuuuuu] (9.8,2.8) circle (0.5pt);\draw [fill=uuuuuu] (10.5,2.8) circle (0.5pt);\draw [fill=uuuuuu] (11.2,2.8) circle (0.5pt);
\draw [fill=uuuuuu] (11.9,2.8) circle (0.5pt);\draw [fill=uuuuuu] (12.6,2.8) circle (0.5pt);\draw [fill=uuuuuu] (13.3,2.8) circle (0.5pt);
\draw [fill=uuuuuu] (6.3,3.5) circle (0.5pt);\draw [fill=uuuuuu] (7.,3.5) circle (0.5pt);\draw [fill=uuuuuu] (7.7,3.5) circle (0.5pt);
\draw [fill=uuuuuu] (8.4,3.5) circle (0.5pt);\draw [fill=uuuuuu] (9.1,3.5) circle (0.5pt);\draw [fill=uuuuuu] (9.8,3.5) circle (0.5pt);
\draw [fill=uuuuuu] (10.5,3.5) circle (0.5pt);\draw [fill=uuuuuu] (11.2,3.5) circle (0.5pt);\draw [fill=uuuuuu] (11.9,3.5) circle (0.5pt);
\draw [fill=uuuuuu] (12.6,3.5) circle (0.5pt);\draw [fill=uuuuuu] (13.3,3.5) circle (0.5pt);\draw [fill=uuuuuu] (6.3,4.2) circle (0.5pt);
\draw [fill=uuuuuu] (7.,4.2) circle (0.5pt);\draw [fill=uuuuuu] (7.7,4.2) circle (0.5pt);\draw [fill=uuuuuu] (8.4,4.2) circle (0.5pt);
\draw [fill=uuuuuu] (9.1,4.2) circle (0.5pt);\draw [fill=uuuuuu] (9.8,4.2) circle (0.5pt);\draw [fill=uuuuuu] (10.5,4.2) circle (0.5pt);
\draw [fill=uuuuuu] (11.2,4.2) circle (0.5pt);\draw [fill=uuuuuu] (11.9,4.2) circle (0.5pt);\draw [fill=uuuuuu] (12.6,4.2) circle (0.5pt);
\draw [fill=uuuuuu] (13.3,4.2) circle (0.5pt);\draw [fill=uuuuuu] (6.3,4.9) circle (0.5pt);\draw [fill=uuuuuu] (7.,4.9) circle (0.5pt);
\draw [fill=uuuuuu] (7.7,4.9) circle (0.5pt);\draw [fill=uuuuuu] (8.4,4.9) circle (0.5pt);\draw [fill=uuuuuu] (9.1,4.9) circle (0.5pt);
\draw [fill=uuuuuu] (9.8,4.9) circle (0.5pt);\draw [fill=uuuuuu] (10.5,4.9) circle (0.5pt);\draw [fill=uuuuuu] (11.2,4.9) circle (0.5pt);
\draw [fill=uuuuuu] (11.9,4.9) circle (0.5pt);\draw [fill=uuuuuu] (12.6,4.9) circle (0.5pt);\draw [fill=uuuuuu] (13.3,4.9) circle (0.5pt);
\draw [fill=uuuuuu] (6.3,5.6) circle (0.5pt);\draw [fill=uuuuuu] (7.,5.6) circle (0.5pt);\draw [fill=uuuuuu] (7.7,5.6) circle (0.5pt);
\draw [fill=uuuuuu] (8.4,5.6) circle (0.5pt);\draw [fill=uuuuuu] (9.1,5.6) circle (0.5pt);\draw [fill=uuuuuu] (9.8,5.6) circle (0.5pt);
\draw [fill=uuuuuu] (10.5,5.6) circle (0.5pt);\draw [fill=uuuuuu] (11.2,5.6) circle (0.5pt);\draw [fill=uuuuuu] (11.9,5.6) circle (0.5pt);
\draw [fill=uuuuuu] (12.6,5.6) circle (0.5pt);\draw [fill=uuuuuu] (13.3,5.6) circle (0.5pt);\draw [fill=uuuuuu] (6.3,6.3) circle (0.5pt);
\draw [fill=uuuuuu] (7.,6.3) circle (0.5pt);\draw [fill=uuuuuu] (7.7,6.3) circle (0.5pt);\draw [fill=uuuuuu] (8.4,6.3) circle (0.5pt);
\draw [fill=uuuuuu] (9.1,6.3) circle (0.5pt);\draw [fill=uuuuuu] (9.8,6.3) circle (0.5pt);\draw [fill=uuuuuu] (10.5,6.3) circle (0.5pt);
\draw [fill=uuuuuu] (11.2,6.3) circle (0.5pt);\draw [fill=uuuuuu] (11.9,6.3) circle (0.5pt);\draw [fill=uuuuuu] (12.6,6.3) circle (0.5pt);
\draw [fill=uuuuuu] (13.3,6.3) circle (0.5pt);
\end{scriptsize}
\end{tikzpicture}
\caption{Lines of the minimal Besicovitch arrangement in ${\mathbb{F}_d}^2$ providing the complexity $C_d$ where $d=7$ (on the left) and 
$d=11$ (on the right). Red lines are those of $\{L_0,L_{-1},L_\infty\}$, green ones are those of $\{L_1,L_{\frac{d-1}{2}},L_{-2}\}$, blue ones are those 
of $\{L_\omega,L_{\omega^2}\}$ and black ones are lines of a $6$-cycle. The number of all but the simple points is $25$ for $d=7$, and $67$ for 
$d=11$; thus $C_7=25$ and $C_{11}=67$.}
\label{Fig2}
\end{figure}

\subsection{The first model}

We first consider the case where $d\equiv1\mod3$. Let us denote by $\Omega^*$ the set of minimal Besicovitch arrangements verifying some geometrical 
constraints similar to those of the considered Besicovitch arrangements. In such arrangements:
\begin{enumerate}[label=$\bullet$]
 \item there exist $3$ lines of equations $x=0$, $y=0$ and $y=x$ (let us denote by $l_a$ this lines set);
 \item there exist $2$ lines that pass through the origin (let us denote by $l_2$ this lines set);
 \item there exist $3$ lines that do not pass through the origin, their $3$ intersection points being respectively on each of the $3$ lines in $l_a$ 
 (let us denote by $l_3$ this lines set);
 \item there exist $\frac{d-7}{6}$ sets of $6$ lines, all verifying the same constraints as in Propositions \ref{prop1}, \ref{prop2} and \ref{prop3}.
\end{enumerate}

In order to calculate the average number of all but simple points in such arrangements, we will build a probability space: $\Omega^*$. The 
$\sigma$-algebra chosen here is the finite collection of all subsets of $\Omega^*$. Our probability measure, denoted by $\p$, assigns equal 
probabilities to all outcomes. 

For $Q$ in ${\mathbb{F}_d}^2$, let $M_{Q}$ be the random variable that maps $A\in\Omega^*$ to the multiplicity of $Q$ in $A$.

With the aim of knowing the expected number of simple points in such particular arrangements, we determine $\p(M_Q=1)$, for all $Q$ in 
${\mathbb{F}_d}^2$. Two cases appear: either $Q$ is in a line of $l_a$ (apart from the origin) or not. 

\subsubsection{\texorpdfstring{$Q$}{} is in a line of \texorpdfstring{$l_a$}{} (apart from the origin)}

In this case, for $A\in\Omega^*$, we have: 
\begin{center}
$M_Q(A)=1\quad\Longleftrightarrow\quad$ none of the $d-2$ lines of $A$ (other than those of $l_a$) pass through $Q$. 
\end{center}

We already know that lines of $l_2$ do not pass through this point.

There is a $\frac{d-2}{d-1}\times\frac{d-3}{d-2}$ probability that the two distinct intersection points between lines of $l_3$ and the considered line 
of $l_a$ do not coincide with $Q$ (a line is composed of $d$ points and the origin is here not considered).

Similarly, there is a $\frac{d-2}{d-1}\times\frac{d-3}{d-2}\times\frac{d-4}{d-3}$ probability that the three distinct intersection points between lines 
of a set of $6$ lines (verifying the same constraints as in Propositions \ref{prop1} and \ref{prop2}) and the considered line of $l_a$ do not coincide 
with $Q$. 

Finally, considering the $\frac{d-7}{6}$ sets of $6$ lines and the lines in $l_2$ and $l_3$, we obtain in this case:
\begin{align*}
\p(M_Q=1)=\frac{d-3}{d-1}\times\Big(\frac{d-4}{d-1}\Big)^{\frac{d-7}{6}}.
\end{align*}

\subsubsection{\texorpdfstring{$Q$}{} is not in a line of \texorpdfstring{$l_a$}{}}

In this case, for $A\in\Omega^*$, we have: 
\begin{center}
$M_Q(A)=1\quad\Longleftrightarrow\quad$ exactly one line of the $d-2$ lines of $A$ (other than those of $l_a$) passes through $Q$.
\end{center}

We will use the following results to study in more detail the different subcases. In ${\mathbb{F}_d}^2\setminus l_a$, there are 
$d^2-3\times(d-1)-1=d^2-3d+2$ points. In ${\mathbb{F}_d}^2\setminus l_a\cup l_2$, there are $2(d-1)$ points of multiplicity $1$ and the remaining points 
of multiplicity $0$ ($d^2-5d+4$ points). In ${\mathbb{F}_d}^2\setminus l_a\cup l_3$, there are $3(d-3)$ points of multiplicity $1$ and the remaining 
points of multiplicity $0$ ($d^2-6d+11$ points). In the union of ${\mathbb{F}_d}^2\setminus l_a$ and a $6$ lines set, there are $6(d-5)$ 
points of multiplicity $1$, $6$ points of multiplicity $2$ (see Proposition \ref{prop3}) and the remaining points of multiplicity $0$ ($d^2-9d+26$ 
points).

This case can be divided in $3$ subcases:
\begin{enumerate}[label=$\bullet$]
 \item the first one where the line that passes through $Q$ is in $l_2$; then the probability is:
 \begin{align*}
 \frac{2(d-1)}{d^2-3d+2}\times\frac{d^2-6d+11}{d^2-3d+2}\times\Big(\frac{d^2-9d+26}{d^2-3d+2}\Big)^{\frac{d-7}{6}};
 \end{align*}
 \item the second one where the line that passes through $Q$ is in $l_3$; then the probability is:
 \begin{align*}
 \frac{d^2-5d+4}{d^2-3d+2}\times\frac{3(d-3)}{d^2-3d+2}\times\Big(\frac{d^2-9d+26}{d^2-3d+2}\Big)^{\frac{d-7}{6}};
 \end{align*}
 \item the third one where the line that passes through $Q$ is in one of the $\frac{d-7}{6}$ sets of $6$ lines; then the probability is:
 \begin{align*}
 \frac{d^2-5d+4}{d^2-3d+2}\times\frac{d^2-6d+11}{d^2-3d+2}\times\frac{d-7}{6}\frac{6(d-5)}{d^2-3d+2}\Big(\frac{d^2-9d+26}{d^2-3d+2}\Big)^{\frac{d-13}{6}}.
 \end{align*} 
\end{enumerate}

Hence we have in this specific case:
\begin{align*}
\p(M_Q=1)&=\frac{2}{d-2}\times\frac{d^2-6d+11}{d^2-3d+2}\times\Big(\frac{d^2-9d+26}{d^2-3d+2}\Big)^{\frac{d-7}{6}}\\
&+\frac{d-4}{d-2}\times\frac{3(d-3)}{d^2-3d+2}\times\Big(\frac{d^2-9d+26}{d^2-3d+2}\Big)^{\frac{d-7}{6}}\\
&+\frac{d-4}{d-2}\times\frac{d^2-6d+11}{d^2-3d+2}\times\frac{d^2-12d+35}{d^2-3d+2}\Big(\frac{d^2-9d+26}{d^2-3d+2}\Big)^{\frac{d-13}{6}}.
\end{align*}

\subsubsection{The expected value of \texorpdfstring{$M^*_{d}$}{}}

Recall that our aim is to determine the expected value $M^*_{d}$ of the number of all but simple points in arrangements of $\Omega^*$ in order to 
compare it with the value of the complexity $C_d$.

Thanks to the results of the above section and knowing that the first case concerns $3d-3$ points and the second one $d^2-3d+2$ points, we get:
\begin{align*}
M^*_{d}=&d^2-\Big(3(d-3)\times\big(\frac{d-4}{d-1}\big)^{\frac{d-7}{6}}+\frac{2(d^2-6d+11)}{d-2}\times\big(\frac{d^2-9d+26}{d^2-3d+2}\big)^{\frac{d-7}{6}}\\
&+\frac{3(d-3)(d-4)}{d-2}\times\big(\frac{d^2-9d+26}{d^2-3d+2}\big)^{\frac{d-7}{6}}\\
&+\frac{(d-4)(d^2-6d+11)}{d-2}\times\frac{d^2-12d+35}{d^2-3d+2}\big(\frac{d^2-9d+26}{d^2-3d+2}\big)^{\frac{d-13}{6}}\Big).
\end{align*}

Using the Computer Algebra System Giac/Xcas (Parisse and De Graeve, 2017, \url{http://www-fourier.ujf-grenoble.fr/~parisse/giac_fr.html}, 
version 1.2.3), we obtain:
\begin{align*}
M^*_{d}=(1-\frac{1}{e})d^2+\big(\frac{1}{e}-3\exp(-\frac{1}{2})\big)d+O(1),\quad\text{as $d\rightarrow\infty$}.
\end{align*}

\subsection{The second model}

We henceforth consider the case where $d\equiv2\mod3$. Let us denote by $\Omega^{**}$ the set of minimal Besicovitch arrangements verifying some 
geometrical constraints similar to those of the considered Besicovitch arrangements. In such arrangements:
\begin{enumerate}[label=$\bullet$]
 \item there exist $3$ lines of equations $x=0$, $y=0$ and $y=x$ (let us denote by $l_a$ this lines set);
 \item there exist $3$ lines that do not pass through the origin, their $3$ intersection points being respectively on each of the $3$ lines in $l_a$ 
 (let us denote by $l_3$ this lines set);
 \item there exist $\frac{d-5}{6}$ sets of $6$ lines, all verifying the same constraints as in Propositions \ref{prop1}, \ref{prop2} and \ref{prop3}.
\end{enumerate}

In order to calculate the average number of all but simple points in such arrangements, we will build a probability space: $\Omega^{**}$. The 
$\sigma$-algebra chosen here is the finite collection of all subsets of $\Omega^{**}$. Our probability measure, denoted by $P$, assigns equal 
probabilities to all outcomes. 

For $Q$ in ${\mathbb{F}_d}^2$, let $M_{Q}$ be the random variable that maps $A\in\Omega^{**}$ to the multiplicity of $Q$ in $A$.

With the aim of knowing the expected number of simple points in such particular arrangements, we determine $P(M_Q=1)$, for all $Q$ in 
${\mathbb{F}_d}^2$. Two cases appear: either $Q$ is in a line of $l_a$ (apart from the origin) or not. 

\subsubsection{\texorpdfstring{$Q$}{} is in a line of \texorpdfstring{$l_a$}{} (apart from the origin)}

In this case, for $A\in\Omega^{**}$, we have: 
\begin{center}
$M_Q(A)=1\quad\Longleftrightarrow\quad$ none of the $d-2$ lines of $A$ (other than those of $l_a$) pass through $Q$. 
\end{center}

There is a $\frac{d-2}{d-1}\times\frac{d-3}{d-2}$ probability that the two distinct intersection points between lines of $l_3$ and the considered line 
of $l_a$ do not coincide with $Q$.

Similarly, there is a $\frac{d-2}{d-1}\times\frac{d-3}{d-2}\times\frac{d-4}{d-3}$ probability that the three distinct intersection points between lines 
of a set of $6$ lines and the considered line of $l_a$ do not coincide with $Q$. 

Finally, considering the $\frac{d-5}{6}$ sets of $6$ lines and the lines in $l_3$, we obtain in this case:
\begin{align*}
P(M_Q=1)=\frac{d-3}{d-1}\times\Big(\frac{d-4}{d-1}\Big)^{\frac{d-5}{6}}.
\end{align*}

\subsubsection{\texorpdfstring{$Q$}{} is not in a line of \texorpdfstring{$l_a$}{}}

In this case, for $A\in\Omega^{**}$, we have: 
\begin{center}
$M_Q(A)=1\quad\Longleftrightarrow\quad$ exactly one line of the $d-2$ lines of $A$ (other than those of $l_a$) passes through $Q$.
\end{center}

We will use the following results to study in more detail the different subcases. In ${\mathbb{F}_d}^2\setminus l_a$, there are $d^2-3d+2$ points. In 
${\mathbb{F}_d}^2\setminus l_a\cup l_3$, there are $3(d-3)$ points of multiplicity $1$ and the remaining points of multiplicity $0$ ($d^2-6d+11$ 
points). In the union of ${\mathbb{F}_d}^2\setminus l_a$ and a $6$ lines set, there are $6(d-5)$ points of multiplicity $1$, $6$ points of 
multiplicity $2$ (see Proposition \ref{prop3}) and the remaining points of multiplicity $0$ ($d^2-9d+26$ points).

This case can be divided in $2$ subcases:
\begin{enumerate}[label=$\bullet$]
 \item the first one where the line that passes through $Q$ is in $l_3$; then the probability is:
 \begin{align*}
 \frac{3(d-3)}{d^2-3d+2}\times\Big(\frac{d^2-9d+26}{d^2-3d+2}\Big)^{\frac{d-5}{6}};
 \end{align*}
 \item the second one where the line that passes through $Q$ is in one of the $\frac{d-5}{6}$ sets of $6$ lines; then the probability is:
 \begin{align*}
 \frac{d^2-6d+11}{d^2-3d+2}\times\frac{d-5}{6}\frac{6(d-5)}{d^2-3d+2}\Big(\frac{d^2-9d+26}{d^2-3d+2}\Big)^{\frac{d-11}{6}}.
 \end{align*} 
\end{enumerate}

Hence we have in this specific case:
\begin{align*}
P(M_Q=1)&=\frac{3(d-3)}{d^2-3d+2}\times\Big(\frac{d^2-9d+26}{d^2-3d+2}\Big)^{\frac{d-5}{6}}\\
&+\frac{d^2-6d+11}{d^2-3d+2}\times\frac{d^2-10d+25}{d^2-3d+2}\Big(\frac{d^2-9d+26}{d^2-3d+2}\Big)^{\frac{d-11}{6}}.
\end{align*}

\subsubsection{The expected value of \texorpdfstring{$M^{**}_{d}$}{}}

Recall that our aim is to determine the expected value $M^{**}_{d}$ of the number of all but simple points in arrangements of $\Omega^{**}$ in order to 
compare it with the value of the complexity $C_d$.

Thanks to the results of the above section and knowing that the first case concerns $3d-3$ points and the second one $d^2-3d+2$ points, we get:
\begin{align*}
M^{**}_{d}=&d^2-\Big(3(d-3)\times\big(\frac{d-4}{d-1}\big)^{\frac{d-5}{6}}+3(d-3)\times\big(\frac{d^2-9d+26}{d^2-3d+2}\big)^{\frac{d-5}{6}}\\
&+(d^2-6d+11)\times\frac{d^2-10d+25}{d^2-3d+2}\big(\frac{d^2-9d+26}{d^2-3d+2}\big)^{\frac{d-11}{6}}\Big).
\end{align*}

Using the Computer Algebra System Xcas, we obtain:
\begin{align*}
M^{**}_{d}=(1-\frac{1}{e})d^2+\big(\frac{1}{e}-3\exp(-\frac{1}{2})\big)d+O(1),\quad\text{as $d\rightarrow\infty$}.
\end{align*}

\subsection{Results}

Figure \ref{Fig1} shows values of both $\frac{C_d-M^*_{d}}{d}$ and $\frac{C_d-M^{**}_{d}}{d}$ for the selected prime numbers $d$.

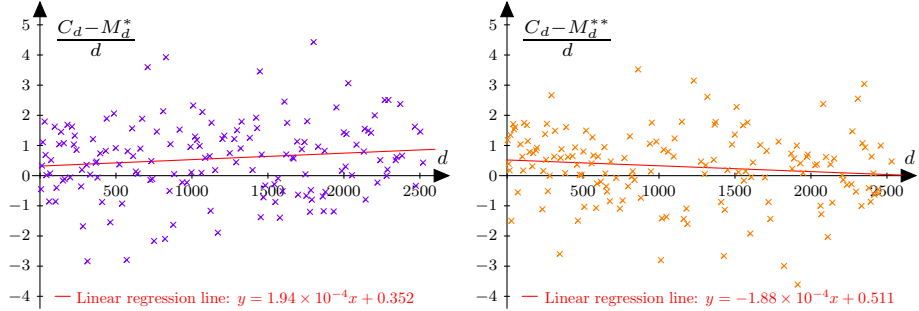
\begin{figure}[ht]
\centering
\definecolor{ffqqqq}{rgb}{1.,0.,0.}
\definecolor{xfqqff}{rgb}{0.4980392156862745,0.,1.}
\begin{tikzpicture}[line cap=round,line join=round,>=triangle 45,x=0.002cm,y=0.4cm]
\draw[->,color=black] (-50,0.) -- (2700,0.);
\foreach \x in {,500,1000,1500,2000,2500}\draw[shift={(\x,0)},color=black,scale=0.5] (0pt,2pt) -- (0pt,-2pt) node[below,scale=0.75] {\footnotesize $\x$};
\draw[->,color=black] (0.,-4.4) -- (0.,5.8);
\foreach \y in {-4,-3,-2,-1,1,2,3,4,5}\draw[shift={(0,\y)},color=black,scale=0.5] (2pt,0pt) -- (-2pt,0pt) node[left,scale=0.75] {\footnotesize $\y$};
\draw[color=black] (-1pt,0pt) node[left,scale=0.75] {\footnotesize $0$};
\clip(-50,-4.4) rectangle (2700,5.8);\draw (30.,5.5) node[anchor=north west] {$\frac{C_d-M^*_{d}}{d}$};\draw (2650,0.6) node[scale=0.75] {$d$};
\draw [color=ffqqqq] (100,-4)-- (200,-4);\draw [color=ffqqqq](1350,-4.05) node[scale=0.6] {Linear regression line: $y=1.94\times10^{-4}x+0.352$};
\draw [color=ffqqqq] (0.,0.3168746394)-- (2600,0.874316492);
\begin{scriptsize}
\draw [color=xfqqff] (7.,-0.4485668012)-- ++(-1.0pt,-1.0pt) -- ++(2.0pt,2.0pt) ++(-2.0pt,0) -- ++(2.0pt,-2.0pt);
\draw [color=xfqqff] (13.,0.3321191375)-- ++(-1.0pt,-1.0pt) -- ++(2.0pt,2.0pt) ++(-2.0pt,0) -- ++(2.0pt,-2.0pt);
\draw [color=xfqqff] (19.,1.1095833276)-- ++(-1.0pt,-1.0pt) -- ++(2.0pt,2.0pt) ++(-2.0pt,0) -- ++(2.0pt,-2.0pt);
\draw [color=xfqqff] (31.,1.7931753739)-- ++(-1.0pt,-1.0pt) -- ++(2.0pt,2.0pt) ++(-2.0pt,0) -- ++(2.0pt,-2.0pt);
\draw [color=xfqqff] (37.,0.6886904275)-- ++(-1.0pt,-1.0pt) -- ++(2.0pt,2.0pt) ++(-2.0pt,0) -- ++(2.0pt,-2.0pt);
\draw [color=xfqqff] (43.,0.019443972)-- ++(-1.0pt,-1.0pt) -- ++(2.0pt,2.0pt) ++(-2.0pt,0) -- ++(2.0pt,-2.0pt);
\draw [color=xfqqff] (61.,-0.856602215)-- ++(-1.0pt,-1.0pt) -- ++(2.0pt,2.0pt) ++(-2.0pt,0) -- ++(2.0pt,-2.0pt);
\draw [color=xfqqff] (67.,0.074805433)-- ++(-1.0pt,-1.0pt) -- ++(2.0pt,2.0pt) ++(-2.0pt,0) -- ++(2.0pt,-2.0pt);
\draw [color=xfqqff] (73.,0.5172610255)-- ++(-1.0pt,-1.0pt) -- ++(2.0pt,2.0pt) ++(-2.0pt,0) -- ++(2.0pt,-2.0pt);
\draw [color=xfqqff] (97.,1.6247558089)-- ++(-1.0pt,-1.0pt) -- ++(2.0pt,2.0pt) ++(-2.0pt,0) -- ++(2.0pt,-2.0pt);
\draw [color=xfqqff] (103.,-0.8565433804)-- ++(-1.0pt,-1.0pt) -- ++(2.0pt,2.0pt) ++(-2.0pt,0) -- ++(2.0pt,-2.0pt);
\draw [color=xfqqff] (109.,-0.3996601877)-- ++(-1.0pt,-1.0pt) -- ++(2.0pt,2.0pt) ++(-2.0pt,0) -- ++(2.0pt,-2.0pt);
\draw [color=xfqqff] (127.,1.0420416694)-- ++(-1.0pt,-1.0pt) -- ++(2.0pt,2.0pt) ++(-2.0pt,0) -- ++(2.0pt,-2.0pt);
\draw [color=xfqqff] (139.,1.4535337012)-- ++(-1.0pt,-1.0pt) -- ++(2.0pt,2.0pt) ++(-2.0pt,0) -- ++(2.0pt,-2.0pt);
\draw [color=xfqqff] (151.,-0.1609886874)-- ++(-1.0pt,-1.0pt) -- ++(2.0pt,2.0pt) ++(-2.0pt,0) -- ++(2.0pt,-2.0pt);
\draw [color=xfqqff] (157.,1.0779996947)-- ++(-1.0pt,-1.0pt) -- ++(2.0pt,2.0pt) ++(-2.0pt,0) -- ++(2.0pt,-2.0pt);
\draw [color=xfqqff] (163.,-0.1147961124)-- ++(-1.0pt,-1.0pt) -- ++(2.0pt,2.0pt) ++(-2.0pt,0) -- ++(2.0pt,-2.0pt);
\draw [color=xfqqff] (181.,1.6869480108)-- ++(-1.0pt,-1.0pt) -- ++(2.0pt,2.0pt) ++(-2.0pt,0) -- ++(2.0pt,-2.0pt);
\draw [color=xfqqff] (199.,-0.0564263037)-- ++(-1.0pt,-1.0pt) -- ++(2.0pt,2.0pt) ++(-2.0pt,0) -- ++(2.0pt,-2.0pt);
\draw [color=xfqqff] (211.,1.6263638511)-- ++(-1.0pt,-1.0pt) -- ++(2.0pt,2.0pt) ++(-2.0pt,0) -- ++(2.0pt,-2.0pt);
\draw [color=xfqqff] (223.,1.033646636)-- ++(-1.0pt,-1.0pt) -- ++(2.0pt,2.0pt) ++(-2.0pt,0) -- ++(2.0pt,-2.0pt);
\draw [color=xfqqff] (229.,1.327092768)-- ++(-1.0pt,-1.0pt) -- ++(2.0pt,2.0pt) ++(-2.0pt,0) -- ++(2.0pt,-2.0pt);
\draw [color=xfqqff] (241.,0.8554690058)-- ++(-1.0pt,-1.0pt) -- ++(2.0pt,2.0pt) ++(-2.0pt,0) -- ++(2.0pt,-2.0pt);
\draw [color=xfqqff] (271.,-0.3566260386)-- ++(-1.0pt,-1.0pt) -- ++(2.0pt,2.0pt) ++(-2.0pt,0) -- ++(2.0pt,-2.0pt);
\draw [color=xfqqff] (277.,0.1936945446)-- ++(-1.0pt,-1.0pt) -- ++(2.0pt,2.0pt) ++(-2.0pt,0) -- ++(2.0pt,-2.0pt);
\draw [color=xfqqff] (283.,-1.6768915058)-- ++(-1.0pt,-1.0pt) -- ++(2.0pt,2.0pt) ++(-2.0pt,0) -- ++(2.0pt,-2.0pt);
\draw [color=xfqqff] (307.,0.359797135)-- ++(-1.0pt,-1.0pt) -- ++(2.0pt,2.0pt) ++(-2.0pt,0) -- ++(2.0pt,-2.0pt);
\draw [color=xfqqff] (313.,-2.8380817044)-- ++(-1.0pt,-1.0pt) -- ++(2.0pt,2.0pt) ++(-2.0pt,0) -- ++(2.0pt,-2.0pt);
\draw [color=xfqqff] (331.,0.0370763252)-- ++(-1.0pt,-1.0pt) -- ++(2.0pt,2.0pt) ++(-2.0pt,0) -- ++(2.0pt,-2.0pt);
\draw [color=xfqqff] (349.,1.2098845714)-- ++(-1.0pt,-1.0pt) -- ++(2.0pt,2.0pt) ++(-2.0pt,0) -- ++(2.0pt,-2.0pt);
\draw [color=xfqqff] (367.,0.4321875229)-- ++(-1.0pt,-1.0pt) -- ++(2.0pt,2.0pt) ++(-2.0pt,0) -- ++(2.0pt,-2.0pt);
\draw [color=xfqqff] (373.,0.4683872693)-- ++(-1.0pt,-1.0pt) -- ++(2.0pt,2.0pt) ++(-2.0pt,0) -- ++(2.0pt,-2.0pt);
\draw [color=xfqqff] (379.,-0.0599171994)-- ++(-1.0pt,-1.0pt) -- ++(2.0pt,2.0pt) ++(-2.0pt,0) -- ++(2.0pt,-2.0pt);
\draw [color=xfqqff] (397.,0.740463419)-- ++(-1.0pt,-1.0pt) -- ++(2.0pt,2.0pt) ++(-2.0pt,0) -- ++(2.0pt,-2.0pt);
\draw [color=xfqqff] (409.,-0.8740450988)-- ++(-1.0pt,-1.0pt) -- ++(2.0pt,2.0pt) ++(-2.0pt,0) -- ++(2.0pt,-2.0pt);
\draw [color=xfqqff] (433.,0.8209384913)-- ++(-1.0pt,-1.0pt) -- ++(2.0pt,2.0pt) ++(-2.0pt,0) -- ++(2.0pt,-2.0pt);
\draw [color=xfqqff] (439.,1.8927615841)-- ++(-1.0pt,-1.0pt) -- ++(2.0pt,2.0pt) ++(-2.0pt,0) -- ++(2.0pt,-2.0pt);
\draw [color=xfqqff] (463.,-1.5472585563)-- ++(-1.0pt,-1.0pt) -- ++(2.0pt,2.0pt) ++(-2.0pt,0) -- ++(2.0pt,-2.0pt);
\draw [color=xfqqff] (487.,2.0618340617)-- ++(-1.0pt,-1.0pt) -- ++(2.0pt,2.0pt) ++(-2.0pt,0) -- ++(2.0pt,-2.0pt);
\draw [color=xfqqff] (499.,0.9164008028)-- ++(-1.0pt,-1.0pt) -- ++(2.0pt,2.0pt) ++(-2.0pt,0) -- ++(2.0pt,-2.0pt);
\draw [color=xfqqff] (523.,0.371841996)-- ++(-1.0pt,-1.0pt) -- ++(2.0pt,2.0pt) ++(-2.0pt,0) -- ++(2.0pt,-2.0pt);
\draw [color=xfqqff] (541.,-0.9256337041)-- ++(-1.0pt,-1.0pt) -- ++(2.0pt,2.0pt) ++(-2.0pt,0) -- ++(2.0pt,-2.0pt);
\draw [color=xfqqff] (571.,-2.7929192607)-- ++(-1.0pt,-1.0pt) -- ++(2.0pt,2.0pt) ++(-2.0pt,0) -- ++(2.0pt,-2.0pt);
\draw [color=xfqqff] (577.,0.8109310764)-- ++(-1.0pt,-1.0pt) -- ++(2.0pt,2.0pt) ++(-2.0pt,0) -- ++(2.0pt,-2.0pt);
\draw [color=xfqqff] (607.,1.5676072385)-- ++(-1.0pt,-1.0pt) -- ++(2.0pt,2.0pt) ++(-2.0pt,0) -- ++(2.0pt,-2.0pt);
\draw [color=xfqqff] (613.,0.6494708173)-- ++(-1.0pt,-1.0pt) -- ++(2.0pt,2.0pt) ++(-2.0pt,0) -- ++(2.0pt,-2.0pt);
\draw [color=xfqqff] (631.,0.1344113385)-- ++(-1.0pt,-1.0pt) -- ++(2.0pt,2.0pt) ++(-2.0pt,0) -- ++(2.0pt,-2.0pt);
\draw [color=xfqqff] (643.,0.0690131076)-- ++(-1.0pt,-1.0pt) -- ++(2.0pt,2.0pt) ++(-2.0pt,0) -- ++(2.0pt,-2.0pt);
\draw [color=xfqqff] (661.,0.8976900267)-- ++(-1.0pt,-1.0pt) -- ++(2.0pt,2.0pt) ++(-2.0pt,0) -- ++(2.0pt,-2.0pt);
\draw [color=xfqqff] (673.,1.8882736125)-- ++(-1.0pt,-1.0pt) -- ++(2.0pt,2.0pt) ++(-2.0pt,0) -- ++(2.0pt,-2.0pt);
\draw [color=xfqqff] (709.,3.5963453271)-- ++(-1.0pt,-1.0pt) -- ++(2.0pt,2.0pt) ++(-2.0pt,0) -- ++(2.0pt,-2.0pt);
\draw [color=xfqqff] (727.,-0.1432077565)-- ++(-1.0pt,-1.0pt) -- ++(2.0pt,2.0pt) ++(-2.0pt,0) -- ++(2.0pt,-2.0pt);
\draw [color=xfqqff] (733.,0.4846143289)-- ++(-1.0pt,-1.0pt) -- ++(2.0pt,2.0pt) ++(-2.0pt,0) -- ++(2.0pt,-2.0pt);
\draw [color=xfqqff] (739.,-0.1122544489)-- ++(-1.0pt,-1.0pt) -- ++(2.0pt,2.0pt) ++(-2.0pt,0) -- ++(2.0pt,-2.0pt);
\draw [color=xfqqff] (751.,-2.166246029)-- ++(-1.0pt,-1.0pt) -- ++(2.0pt,2.0pt) ++(-2.0pt,0) -- ++(2.0pt,-2.0pt);
\draw [color=xfqqff] (769.,1.4609389039)-- ++(-1.0pt,-1.0pt) -- ++(2.0pt,2.0pt) ++(-2.0pt,0) -- ++(2.0pt,-2.0pt);
\draw [color=xfqqff] (811.,2.1355297524)-- ++(-1.0pt,-1.0pt) -- ++(2.0pt,2.0pt) ++(-2.0pt,0) -- ++(2.0pt,-2.0pt);
\draw [color=xfqqff] (823.,-2.0975510075)-- ++(-1.0pt,-1.0pt) -- ++(2.0pt,2.0pt) ++(-2.0pt,0) -- ++(2.0pt,-2.0pt);
\draw [color=xfqqff] (829.,3.9262327425)-- ++(-1.0pt,-1.0pt) -- ++(2.0pt,2.0pt) ++(-2.0pt,0) -- ++(2.0pt,-2.0pt);
\draw [color=xfqqff] (853.,-0.5038149896)-- ++(-1.0pt,-1.0pt) -- ++(2.0pt,2.0pt) ++(-2.0pt,0) -- ++(2.0pt,-2.0pt);
\draw [color=xfqqff] (859.,1.0451848971)-- ++(-1.0pt,-1.0pt) -- ++(2.0pt,2.0pt) ++(-2.0pt,0) -- ++(2.0pt,-2.0pt);
\draw [color=xfqqff] (877.,-1.6289147392)-- ++(-1.0pt,-1.0pt) -- ++(2.0pt,2.0pt) ++(-2.0pt,0) -- ++(2.0pt,-2.0pt);
\draw [color=xfqqff] (883.,0.0497837589)-- ++(-1.0pt,-1.0pt) -- ++(2.0pt,2.0pt) ++(-2.0pt,0) -- ++(2.0pt,-2.0pt);
\draw [color=xfqqff] (919.,1.5280701372)-- ++(-1.0pt,-1.0pt) -- ++(2.0pt,2.0pt) ++(-2.0pt,0) -- ++(2.0pt,-2.0pt);
\draw [color=xfqqff] (937.,-0.2224717684)-- ++(-1.0pt,-1.0pt) -- ++(2.0pt,2.0pt) ++(-2.0pt,0) -- ++(2.0pt,-2.0pt);
\draw [color=xfqqff] (967.,-0.2281597883)-- ++(-1.0pt,-1.0pt) -- ++(2.0pt,2.0pt) ++(-2.0pt,0) -- ++(2.0pt,-2.0pt);
\draw [color=xfqqff] (991.,0.9582256762)-- ++(-1.0pt,-1.0pt) -- ++(2.0pt,2.0pt) ++(-2.0pt,0) -- ++(2.0pt,-2.0pt);
\draw [color=xfqqff] (997.,0.1087040942)-- ++(-1.0pt,-1.0pt) -- ++(2.0pt,2.0pt) ++(-2.0pt,0) -- ++(2.0pt,-2.0pt);
\draw [color=xfqqff] (1009.,2.3274456012)-- ++(-1.0pt,-1.0pt) -- ++(2.0pt,2.0pt) ++(-2.0pt,0) -- ++(2.0pt,-2.0pt);
\draw [color=xfqqff] (1021.,1.2951581943)-- ++(-1.0pt,-1.0pt) -- ++(2.0pt,2.0pt) ++(-2.0pt,0) -- ++(2.0pt,-2.0pt);
\draw [color=xfqqff] (1033.,1.1822552723)-- ++(-1.0pt,-1.0pt) -- ++(2.0pt,2.0pt) ++(-2.0pt,0) -- ++(2.0pt,-2.0pt);
\draw [color=xfqqff] (1051.,0.9841050023)-- ++(-1.0pt,-1.0pt) -- ++(2.0pt,2.0pt) ++(-2.0pt,0) -- ++(2.0pt,-2.0pt);
\draw [color=xfqqff] (1063.,0.0941493187)-- ++(-1.0pt,-1.0pt) -- ++(2.0pt,2.0pt) ++(-2.0pt,0) -- ++(2.0pt,-2.0pt);
\draw [color=xfqqff] (1069.,2.1129138338)-- ++(-1.0pt,-1.0pt) -- ++(2.0pt,2.0pt) ++(-2.0pt,0) -- ++(2.0pt,-2.0pt);
\draw [color=xfqqff] (1087.,0.5919975065)-- ++(-1.0pt,-1.0pt) -- ++(2.0pt,2.0pt) ++(-2.0pt,0) -- ++(2.0pt,-2.0pt);
\draw [color=xfqqff] (1117.,0.6474858288)-- ++(-1.0pt,-1.0pt) -- ++(2.0pt,2.0pt) ++(-2.0pt,0) -- ++(2.0pt,-2.0pt);
\draw [color=xfqqff] (1123.,-1.1645131754)-- ++(-1.0pt,-1.0pt) -- ++(2.0pt,2.0pt) ++(-2.0pt,0) -- ++(2.0pt,-2.0pt);
\draw [color=xfqqff] (1129.,1.7561993413)-- ++(-1.0pt,-1.0pt) -- ++(2.0pt,2.0pt) ++(-2.0pt,0) -- ++(2.0pt,-2.0pt);
\draw [color=xfqqff] (1153.,0.1835313582)-- ++(-1.0pt,-1.0pt) -- ++(2.0pt,2.0pt) ++(-2.0pt,0) -- ++(2.0pt,-2.0pt);
\draw [color=xfqqff] (1171.,-1.8907980733)-- ++(-1.0pt,-1.0pt) -- ++(2.0pt,2.0pt) ++(-2.0pt,0) -- ++(2.0pt,-2.0pt);
\draw [color=xfqqff] (1201.,1.2179252059)-- ++(-1.0pt,-1.0pt) -- ++(2.0pt,2.0pt) ++(-2.0pt,0) -- ++(2.0pt,-2.0pt);
\draw [color=xfqqff] (1213.,1.3247159875)-- ++(-1.0pt,-1.0pt) -- ++(2.0pt,2.0pt) ++(-2.0pt,0) -- ++(2.0pt,-2.0pt);
\draw [color=xfqqff] (1249.,0.7486253925)-- ++(-1.0pt,-1.0pt) -- ++(2.0pt,2.0pt) ++(-2.0pt,0) -- ++(2.0pt,-2.0pt);
\draw [color=xfqqff] (1279.,1.2177963873)-- ++(-1.0pt,-1.0pt) -- ++(2.0pt,2.0pt) ++(-2.0pt,0) -- ++(2.0pt,-2.0pt);
\draw [color=xfqqff] (1291.,0.4456887682)-- ++(-1.0pt,-1.0pt) -- ++(2.0pt,2.0pt) ++(-2.0pt,0) -- ++(2.0pt,-2.0pt);
\draw [color=xfqqff] (1297.,1.5077305235)-- ++(-1.0pt,-1.0pt) -- ++(2.0pt,2.0pt) ++(-2.0pt,0) -- ++(2.0pt,-2.0pt);
\draw [color=xfqqff] (1321.,0.9045941245)-- ++(-1.0pt,-1.0pt) -- ++(2.0pt,2.0pt) ++(-2.0pt,0) -- ++(2.0pt,-2.0pt);
\draw [color=xfqqff] (1327.,1.7916712315)-- ++(-1.0pt,-1.0pt) -- ++(2.0pt,2.0pt) ++(-2.0pt,0) -- ++(2.0pt,-2.0pt);
\draw [color=xfqqff] (1381.,1.2524888236)-- ++(-1.0pt,-1.0pt) -- ++(2.0pt,2.0pt) ++(-2.0pt,0) -- ++(2.0pt,-2.0pt);
\draw [color=xfqqff] (1399.,0.1876377848)-- ++(-1.0pt,-1.0pt) -- ++(2.0pt,2.0pt) ++(-2.0pt,0) -- ++(2.0pt,-2.0pt);
\draw [color=xfqqff] (1423.,1.9234612273)-- ++(-1.0pt,-1.0pt) -- ++(2.0pt,2.0pt) ++(-2.0pt,0) -- ++(2.0pt,-2.0pt);
\draw [color=xfqqff] (1429.,1.5779906517)-- ++(-1.0pt,-1.0pt) -- ++(2.0pt,2.0pt) ++(-2.0pt,0) -- ++(2.0pt,-2.0pt);
\draw [color=xfqqff] (1447.,3.4571268157)-- ++(-1.0pt,-1.0pt) -- ++(2.0pt,2.0pt) ++(-2.0pt,0) -- ++(2.0pt,-2.0pt);
\draw [color=xfqqff] (1453.,-1.3769613937)-- ++(-1.0pt,-1.0pt) -- ++(2.0pt,2.0pt) ++(-2.0pt,0) -- ++(2.0pt,-2.0pt);
\draw [color=xfqqff] (1459.,0.70018849)-- ++(-1.0pt,-1.0pt) -- ++(2.0pt,2.0pt) ++(-2.0pt,0) -- ++(2.0pt,-2.0pt);
\draw [color=xfqqff] (1471.,-0.7181259081)-- ++(-1.0pt,-1.0pt) -- ++(2.0pt,2.0pt) ++(-2.0pt,0) -- ++(2.0pt,-2.0pt);
\draw [color=xfqqff] (1483.,-0.3245800074)-- ++(-1.0pt,-1.0pt) -- ++(2.0pt,2.0pt) ++(-2.0pt,0) -- ++(2.0pt,-2.0pt);
\draw [color=xfqqff] (1543.,-0.2864923419)-- ++(-1.0pt,-1.0pt) -- ++(2.0pt,2.0pt) ++(-2.0pt,0) -- ++(2.0pt,-2.0pt);
\draw [color=xfqqff] (1549.,-0.8705280695)-- ++(-1.0pt,-1.0pt) -- ++(2.0pt,2.0pt) ++(-2.0pt,0) -- ++(2.0pt,-2.0pt);
\draw [color=xfqqff] (1567.,-0.0897861846)-- ++(-1.0pt,-1.0pt) -- ++(2.0pt,2.0pt) ++(-2.0pt,0) -- ++(2.0pt,-2.0pt);
\draw [color=xfqqff] (1579.,-1.3939769906)-- ++(-1.0pt,-1.0pt) -- ++(2.0pt,2.0pt) ++(-2.0pt,0) -- ++(2.0pt,-2.0pt);
\draw [color=xfqqff] (1597.,-0.5122272056)-- ++(-1.0pt,-1.0pt) -- ++(2.0pt,2.0pt) ++(-2.0pt,0) -- ++(2.0pt,-2.0pt);
\draw [color=xfqqff] (1609.,2.4552216475)-- ++(-1.0pt,-1.0pt) -- ++(2.0pt,2.0pt) ++(-2.0pt,0) -- ++(2.0pt,-2.0pt);
\draw [color=xfqqff] (1621.,0.7044289865)-- ++(-1.0pt,-1.0pt) -- ++(2.0pt,2.0pt) ++(-2.0pt,0) -- ++(2.0pt,-2.0pt);
\draw [color=xfqqff] (1627.,1.7521968313)-- ++(-1.0pt,-1.0pt) -- ++(2.0pt,2.0pt) ++(-2.0pt,0) -- ++(2.0pt,-2.0pt);
\draw [color=xfqqff] (1663.,0.6017601316)-- ++(-1.0pt,-1.0pt) -- ++(2.0pt,2.0pt) ++(-2.0pt,0) -- ++(2.0pt,-2.0pt);
\draw [color=xfqqff] (1669.,0.5698521228)-- ++(-1.0pt,-1.0pt) -- ++(2.0pt,2.0pt) ++(-2.0pt,0) -- ++(2.0pt,-2.0pt);
\draw [color=xfqqff] (1693.,1.1325344152)-- ++(-1.0pt,-1.0pt) -- ++(2.0pt,2.0pt) ++(-2.0pt,0) -- ++(2.0pt,-2.0pt);
\draw [color=xfqqff] (1699.,-0.6308572944)-- ++(-1.0pt,-1.0pt) -- ++(2.0pt,2.0pt) ++(-2.0pt,0) -- ++(2.0pt,-2.0pt);
\draw [color=xfqqff] (1723.,0.8765045965)-- ++(-1.0pt,-1.0pt) -- ++(2.0pt,2.0pt) ++(-2.0pt,0) -- ++(2.0pt,-2.0pt);
\draw [color=xfqqff] (1741.,1.1133643006)-- ++(-1.0pt,-1.0pt) -- ++(2.0pt,2.0pt) ++(-2.0pt,0) -- ++(2.0pt,-2.0pt);
\draw [color=xfqqff] (1747.,0.0452538613)-- ++(-1.0pt,-1.0pt) -- ++(2.0pt,2.0pt) ++(-2.0pt,0) -- ++(2.0pt,-2.0pt);
\draw [color=xfqqff] (1753.,-0.6650102318)-- ++(-1.0pt,-1.0pt) -- ++(2.0pt,2.0pt) ++(-2.0pt,0) -- ++(2.0pt,-2.0pt);
\draw [color=xfqqff] (1759.,1.8049476697)-- ++(-1.0pt,-1.0pt) -- ++(2.0pt,2.0pt) ++(-2.0pt,0) -- ++(2.0pt,-2.0pt);
\draw [color=xfqqff] (1777.,-1.2004363923)-- ++(-1.0pt,-1.0pt) -- ++(2.0pt,2.0pt) ++(-2.0pt,0) -- ++(2.0pt,-2.0pt);
\draw [color=xfqqff] (1783.,-0.4809648364)-- ++(-1.0pt,-1.0pt) -- ++(2.0pt,2.0pt) ++(-2.0pt,0) -- ++(2.0pt,-2.0pt);
\draw [color=xfqqff] (1789.,-0.7962313124)-- ++(-1.0pt,-1.0pt) -- ++(2.0pt,2.0pt) ++(-2.0pt,0) -- ++(2.0pt,-2.0pt);
\draw [color=xfqqff] (1801.,4.4271011169)-- ++(-1.0pt,-1.0pt) -- ++(2.0pt,2.0pt) ++(-2.0pt,0) -- ++(2.0pt,-2.0pt);
\draw [color=xfqqff] (1831.,0.9628096822)-- ++(-1.0pt,-1.0pt) -- ++(2.0pt,2.0pt) ++(-2.0pt,0) -- ++(2.0pt,-2.0pt);
\draw [color=xfqqff] (1861.,-0.6863068594)-- ++(-1.0pt,-1.0pt) -- ++(2.0pt,2.0pt) ++(-2.0pt,0) -- ++(2.0pt,-2.0pt);
\draw [color=xfqqff] (1867.,-1.1777072444)-- ++(-1.0pt,-1.0pt) -- ++(2.0pt,2.0pt) ++(-2.0pt,0) -- ++(2.0pt,-2.0pt);
\draw [color=xfqqff] (1873.,0.8244236703)-- ++(-1.0pt,-1.0pt) -- ++(2.0pt,2.0pt) ++(-2.0pt,0) -- ++(2.0pt,-2.0pt);
\draw [color=xfqqff] (1879.,-0.540948476)-- ++(-1.0pt,-1.0pt) -- ++(2.0pt,2.0pt) ++(-2.0pt,0) -- ++(2.0pt,-2.0pt);
\draw [color=xfqqff] (1933.,-1.1861367253)-- ++(-1.0pt,-1.0pt) -- ++(2.0pt,2.0pt) ++(-2.0pt,0) -- ++(2.0pt,-2.0pt);
\draw [color=xfqqff] (1951.,2.2626940519)-- ++(-1.0pt,-1.0pt) -- ++(2.0pt,2.0pt) ++(-2.0pt,0) -- ++(2.0pt,-2.0pt);
\draw [color=xfqqff] (1987.,0.1561703551)-- ++(-1.0pt,-1.0pt) -- ++(2.0pt,2.0pt) ++(-2.0pt,0) -- ++(2.0pt,-2.0pt);
\draw [color=xfqqff] (1999.,1.4357881538)-- ++(-1.0pt,-1.0pt) -- ++(2.0pt,2.0pt) ++(-2.0pt,0) -- ++(2.0pt,-2.0pt);
\draw [color=xfqqff] (2011.,0.3003082158)-- ++(-1.0pt,-1.0pt) -- ++(2.0pt,2.0pt) ++(-2.0pt,0) -- ++(2.0pt,-2.0pt);
\draw [color=xfqqff] (2017.,1.3815272447)-- ++(-1.0pt,-1.0pt) -- ++(2.0pt,2.0pt) ++(-2.0pt,0) -- ++(2.0pt,-2.0pt);
\draw [color=xfqqff] (2029.,3.0641910659)-- ++(-1.0pt,-1.0pt) -- ++(2.0pt,2.0pt) ++(-2.0pt,0) -- ++(2.0pt,-2.0pt);
\draw [color=xfqqff] (2053.,0.0138811246)-- ++(-1.0pt,-1.0pt) -- ++(2.0pt,2.0pt) ++(-2.0pt,0) -- ++(2.0pt,-2.0pt);
\draw [color=xfqqff] (2083.,1.0290592828)-- ++(-1.0pt,-1.0pt) -- ++(2.0pt,2.0pt) ++(-2.0pt,0) -- ++(2.0pt,-2.0pt);
\draw [color=xfqqff] (2131.,0.8005270612)-- ++(-1.0pt,-1.0pt) -- ++(2.0pt,2.0pt) ++(-2.0pt,0) -- ++(2.0pt,-2.0pt);
\draw [color=xfqqff] (2137.,1.5537155784)-- ++(-1.0pt,-1.0pt) -- ++(2.0pt,2.0pt) ++(-2.0pt,0) -- ++(2.0pt,-2.0pt);
\draw [color=xfqqff] (2143.,-0.7913464609)-- ++(-1.0pt,-1.0pt) -- ++(2.0pt,2.0pt) ++(-2.0pt,0) -- ++(2.0pt,-2.0pt);
\draw [color=xfqqff] (2161.,1.4931196122)-- ++(-1.0pt,-1.0pt) -- ++(2.0pt,2.0pt) ++(-2.0pt,0) -- ++(2.0pt,-2.0pt);
\draw [color=xfqqff] (2179.,1.4219845373)-- ++(-1.0pt,-1.0pt) -- ++(2.0pt,2.0pt) ++(-2.0pt,0) -- ++(2.0pt,-2.0pt);
\draw [color=xfqqff] (2203.,1.9988608935)-- ++(-1.0pt,-1.0pt) -- ++(2.0pt,2.0pt) ++(-2.0pt,0) -- ++(2.0pt,-2.0pt);
\draw [color=xfqqff] (2221.,-0.2289072102)-- ++(-1.0pt,-1.0pt) -- ++(2.0pt,2.0pt) ++(-2.0pt,0) -- ++(2.0pt,-2.0pt);
\draw [color=xfqqff] (2239.,0.5154486382)-- ++(-1.0pt,-1.0pt) -- ++(2.0pt,2.0pt) ++(-2.0pt,0) -- ++(2.0pt,-2.0pt);
\draw [color=xfqqff] (2269.,2.5057984071)-- ++(-1.0pt,-1.0pt) -- ++(2.0pt,2.0pt) ++(-2.0pt,0) -- ++(2.0pt,-2.0pt);
\draw [color=xfqqff] (2281.,-0.0020628421)-- ++(-1.0pt,-1.0pt) -- ++(2.0pt,2.0pt) ++(-2.0pt,0) -- ++(2.0pt,-2.0pt);
\draw [color=xfqqff] (2287.,0.2023807051)-- ++(-1.0pt,-1.0pt) -- ++(2.0pt,2.0pt) ++(-2.0pt,0) -- ++(2.0pt,-2.0pt);
\draw [color=xfqqff] (2293.,2.5106332349)-- ++(-1.0pt,-1.0pt) -- ++(2.0pt,2.0pt) ++(-2.0pt,0) -- ++(2.0pt,-2.0pt);
\draw [color=xfqqff] (2341.,0.5141923093)-- ++(-1.0pt,-1.0pt) -- ++(2.0pt,2.0pt) ++(-2.0pt,0) -- ++(2.0pt,-2.0pt);
\draw [color=xfqqff] (2347.,0.6165945497)-- ++(-1.0pt,-1.0pt) -- ++(2.0pt,2.0pt) ++(-2.0pt,0) -- ++(2.0pt,-2.0pt);
\draw [color=xfqqff] (2371.,2.3752481879)-- ++(-1.0pt,-1.0pt) -- ++(2.0pt,2.0pt) ++(-2.0pt,0) -- ++(2.0pt,-2.0pt);
\draw [color=xfqqff] (2383.,0.6617374714)-- ++(-1.0pt,-1.0pt) -- ++(2.0pt,2.0pt) ++(-2.0pt,0) -- ++(2.0pt,-2.0pt);
\draw [color=xfqqff] (2467.,1.625029909)-- ++(-1.0pt,-1.0pt) -- ++(2.0pt,2.0pt) ++(-2.0pt,0) -- ++(2.0pt,-2.0pt);
\draw [color=xfqqff] (2473.,-0.3386147549)-- ++(-1.0pt,-1.0pt) -- ++(2.0pt,2.0pt) ++(-2.0pt,0) -- ++(2.0pt,-2.0pt);
\draw [color=xfqqff] (2503.,1.4599281401)-- ++(-1.0pt,-1.0pt) -- ++(2.0pt,2.0pt) ++(-2.0pt,0) -- ++(2.0pt,-2.0pt);
\draw [color=xfqqff] (2521.,0.4321319418)-- ++(-1.0pt,-1.0pt) -- ++(2.0pt,2.0pt) ++(-2.0pt,0) -- ++(2.0pt,-2.0pt);
\end{scriptsize}
\end{tikzpicture}
\definecolor{ffxfqq}{rgb}{1.,0.4980392156862745,0.}
\begin{tikzpicture}[line cap=round,line join=round,>=triangle 45,x=0.002cm,y=0.4cm]
\draw[->,color=black] (-50,0.) -- (2700,0.);
\foreach \x in {,500,1000,1500,2000,2500}\draw[shift={(\x,0)},color=black,scale=0.5] (0pt,2pt) -- (0pt,-2pt) node[below,scale=0.75] {\footnotesize $\x$};
\draw[->,color=black] (0.,-4.4) -- (0.,5.8);
\foreach \y in {-4,-3,-2,-1,1,2,3,4,5}\draw[shift={(0,\y)},color=black,scale=0.5] (2pt,0pt) -- (-2pt,0pt) node[left,scale=0.75] {\footnotesize $\y$};
\draw[color=black] (-1pt,0pt) node[left,scale=0.75] {\footnotesize $0$};\clip(-50,-4.4) rectangle (2700,5.8);\draw [color=ffqqqq] (100,-4)-- (200,-4);
\draw [color=ffqqqq](1400,-4.05) node[scale=0.6] {Linear regression line: $y=-1.88\times10^{-4}x+0.511$};\draw (2650,0.6) node[scale=0.75] {$d$};
\draw [color=ffqqqq] (0.,0.5199578177)-- (2600,0.0183440266);\draw (30.,5.5) node[anchor=north west] {$\frac{C_d-M^{**}_{d}}{d}$};
\begin{scriptsize}
\draw [color=ffxfqq] (5.,1.1755266274)-- ++(-1.0pt,-1.0pt) -- ++(2.0pt,2.0pt) ++(-2.0pt,0) -- ++(2.0pt,-2.0pt);
\draw [color=ffxfqq] (11.,0.3957636219)-- ++(-1.0pt,-1.0pt) -- ++(2.0pt,2.0pt) ++(-2.0pt,0) -- ++(2.0pt,-2.0pt);
\draw [color=ffxfqq] (17.,0.1749073665)-- ++(-1.0pt,-1.0pt) -- ++(2.0pt,2.0pt) ++(-2.0pt,0) -- ++(2.0pt,-2.0pt);
\draw [color=ffxfqq] (23.,1.3507849275)-- ++(-1.0pt,-1.0pt) -- ++(2.0pt,2.0pt) ++(-2.0pt,0) -- ++(2.0pt,-2.0pt);
\draw [color=ffxfqq] (29.,-1.4961104443)-- ++(-1.0pt,-1.0pt) -- ++(2.0pt,2.0pt) ++(-2.0pt,0) -- ++(2.0pt,-2.0pt);
\draw [color=ffxfqq] (41.,1.7100835432)-- ++(-1.0pt,-1.0pt) -- ++(2.0pt,2.0pt) ++(-2.0pt,0) -- ++(2.0pt,-2.0pt);
\draw [color=ffxfqq] (47.,1.5336681548)-- ++(-1.0pt,-1.0pt) -- ++(2.0pt,2.0pt) ++(-2.0pt,0) -- ++(2.0pt,-2.0pt);
\draw [color=ffxfqq] (53.,1.6138551774)-- ++(-1.0pt,-1.0pt) -- ++(2.0pt,2.0pt) ++(-2.0pt,0) -- ++(2.0pt,-2.0pt);
\draw [color=ffxfqq] (71.,-0.5521237295)-- ++(-1.0pt,-1.0pt) -- ++(2.0pt,2.0pt) ++(-2.0pt,0) -- ++(2.0pt,-2.0pt);
\draw [color=ffxfqq] (89.,1.0386358709)-- ++(-1.0pt,-1.0pt) -- ++(2.0pt,2.0pt) ++(-2.0pt,0) -- ++(2.0pt,-2.0pt);
\draw [color=ffxfqq] (101.,0.630015312)-- ++(-1.0pt,-1.0pt) -- ++(2.0pt,2.0pt) ++(-2.0pt,0) -- ++(2.0pt,-2.0pt);
\draw [color=ffxfqq] (107.,0.1725081337)-- ++(-1.0pt,-1.0pt) -- ++(2.0pt,2.0pt) ++(-2.0pt,0) -- ++(2.0pt,-2.0pt);
\draw [color=ffxfqq] (113.,1.750196321)-- ++(-1.0pt,-1.0pt) -- ++(2.0pt,2.0pt) ++(-2.0pt,0) -- ++(2.0pt,-2.0pt);
\draw [color=ffxfqq] (131.,1.6766109326)-- ++(-1.0pt,-1.0pt) -- ++(2.0pt,2.0pt) ++(-2.0pt,0) -- ++(2.0pt,-2.0pt);
\draw [color=ffxfqq] (137.,1.1452425552)-- ++(-1.0pt,-1.0pt) -- ++(2.0pt,2.0pt) ++(-2.0pt,0) -- ++(2.0pt,-2.0pt);
\draw [color=ffxfqq] (149.,0.7844573413)-- ++(-1.0pt,-1.0pt) -- ++(2.0pt,2.0pt) ++(-2.0pt,0) -- ++(2.0pt,-2.0pt);
\draw [color=ffxfqq] (167.,0.3084853446)-- ++(-1.0pt,-1.0pt) -- ++(2.0pt,2.0pt) ++(-2.0pt,0) -- ++(2.0pt,-2.0pt);
\draw [color=ffxfqq] (173.,0.7497253237)-- ++(-1.0pt,-1.0pt) -- ++(2.0pt,2.0pt) ++(-2.0pt,0) -- ++(2.0pt,-2.0pt);
\draw [color=ffxfqq] (191.,0.8386505995)-- ++(-1.0pt,-1.0pt) -- ++(2.0pt,2.0pt) ++(-2.0pt,0) -- ++(2.0pt,-2.0pt);
\draw [color=ffxfqq] (197.,0.9254655507)-- ++(-1.0pt,-1.0pt) -- ++(2.0pt,2.0pt) ++(-2.0pt,0) -- ++(2.0pt,-2.0pt);
\draw [color=ffxfqq] (233.,-1.1143510536)-- ++(-1.0pt,-1.0pt) -- ++(2.0pt,2.0pt) ++(-2.0pt,0) -- ++(2.0pt,-2.0pt);
\draw [color=ffxfqq] (239.,0.1585841719)-- ++(-1.0pt,-1.0pt) -- ++(2.0pt,2.0pt) ++(-2.0pt,0) -- ++(2.0pt,-2.0pt);
\draw [color=ffxfqq] (251.,1.3802960955)-- ++(-1.0pt,-1.0pt) -- ++(2.0pt,2.0pt) ++(-2.0pt,0) -- ++(2.0pt,-2.0pt);
\draw [color=ffxfqq] (257.,0.5270879928)-- ++(-1.0pt,-1.0pt) -- ++(2.0pt,2.0pt) ++(-2.0pt,0) -- ++(2.0pt,-2.0pt);
\draw [color=ffxfqq] (263.,1.7184646606)-- ++(-1.0pt,-1.0pt) -- ++(2.0pt,2.0pt) ++(-2.0pt,0) -- ++(2.0pt,-2.0pt);
\draw [color=ffxfqq] (269.,-0.5132412777)-- ++(-1.0pt,-1.0pt) -- ++(2.0pt,2.0pt) ++(-2.0pt,0) -- ++(2.0pt,-2.0pt);
\draw [color=ffxfqq] (281.,0.0012945895)-- ++(-1.0pt,-1.0pt) -- ++(2.0pt,2.0pt) ++(-2.0pt,0) -- ++(2.0pt,-2.0pt);
\draw [color=ffxfqq] (293.,2.6680500815)-- ++(-1.0pt,-1.0pt) -- ++(2.0pt,2.0pt) ++(-2.0pt,0) -- ++(2.0pt,-2.0pt);
\draw [color=ffxfqq] (311.,0.6316906852)-- ++(-1.0pt,-1.0pt) -- ++(2.0pt,2.0pt) ++(-2.0pt,0) -- ++(2.0pt,-2.0pt);
\draw [color=ffxfqq] (317.,1.490019326)-- ++(-1.0pt,-1.0pt) -- ++(2.0pt,2.0pt) ++(-2.0pt,0) -- ++(2.0pt,-2.0pt);
\draw [color=ffxfqq] (347.,-2.5964052611)-- ++(-1.0pt,-1.0pt) -- ++(2.0pt,2.0pt) ++(-2.0pt,0) -- ++(2.0pt,-2.0pt);
\draw [color=ffxfqq] (353.,0.5916474627)-- ++(-1.0pt,-1.0pt) -- ++(2.0pt,2.0pt) ++(-2.0pt,0) -- ++(2.0pt,-2.0pt);
\draw [color=ffxfqq] (359.,0.8806212968)-- ++(-1.0pt,-1.0pt) -- ++(2.0pt,2.0pt) ++(-2.0pt,0) -- ++(2.0pt,-2.0pt);
\draw [color=ffxfqq] (383.,0.0840264516)-- ++(-1.0pt,-1.0pt) -- ++(2.0pt,2.0pt) ++(-2.0pt,0) -- ++(2.0pt,-2.0pt);
\draw [color=ffxfqq] (389.,1.60902361)-- ++(-1.0pt,-1.0pt) -- ++(2.0pt,2.0pt) ++(-2.0pt,0) -- ++(2.0pt,-2.0pt);
\draw [color=ffxfqq] (401.,0.6205192805)-- ++(-1.0pt,-1.0pt) -- ++(2.0pt,2.0pt) ++(-2.0pt,0) -- ++(2.0pt,-2.0pt);
\draw [color=ffxfqq] (431.,0.6488430356)-- ++(-1.0pt,-1.0pt) -- ++(2.0pt,2.0pt) ++(-2.0pt,0) -- ++(2.0pt,-2.0pt);
\draw [color=ffxfqq] (449.,0.9376236037)-- ++(-1.0pt,-1.0pt) -- ++(2.0pt,2.0pt) ++(-2.0pt,0) -- ++(2.0pt,-2.0pt);
\draw [color=ffxfqq] (461.,0.3658510685)-- ++(-1.0pt,-1.0pt) -- ++(2.0pt,2.0pt) ++(-2.0pt,0) -- ++(2.0pt,-2.0pt);
\draw [color=ffxfqq] (467.,-1.4909625509)-- ++(-1.0pt,-1.0pt) -- ++(2.0pt,2.0pt) ++(-2.0pt,0) -- ++(2.0pt,-2.0pt);
\draw [color=ffxfqq] (479.,0.0298720866)-- ++(-1.0pt,-1.0pt) -- ++(2.0pt,2.0pt) ++(-2.0pt,0) -- ++(2.0pt,-2.0pt);
\draw [color=ffxfqq] (491.,0.0302367328)-- ++(-1.0pt,-1.0pt) -- ++(2.0pt,2.0pt) ++(-2.0pt,0) -- ++(2.0pt,-2.0pt);
\draw [color=ffxfqq] (503.,0.8018548879)-- ++(-1.0pt,-1.0pt) -- ++(2.0pt,2.0pt) ++(-2.0pt,0) -- ++(2.0pt,-2.0pt);
\draw [color=ffxfqq] (509.,-1.0318126321)-- ++(-1.0pt,-1.0pt) -- ++(2.0pt,2.0pt) ++(-2.0pt,0) -- ++(2.0pt,-2.0pt);
\draw [color=ffxfqq] (521.,1.4937006612)-- ++(-1.0pt,-1.0pt) -- ++(2.0pt,2.0pt) ++(-2.0pt,0) -- ++(2.0pt,-2.0pt);
\draw [color=ffxfqq] (557.,0.4078019416)-- ++(-1.0pt,-1.0pt) -- ++(2.0pt,2.0pt) ++(-2.0pt,0) -- ++(2.0pt,-2.0pt);
\draw [color=ffxfqq] (563.,1.2717689145)-- ++(-1.0pt,-1.0pt) -- ++(2.0pt,2.0pt) ++(-2.0pt,0) -- ++(2.0pt,-2.0pt);
\draw [color=ffxfqq] (569.,0.6719744007)-- ++(-1.0pt,-1.0pt) -- ++(2.0pt,2.0pt) ++(-2.0pt,0) -- ++(2.0pt,-2.0pt);
\draw [color=ffxfqq] (587.,-1.0693016251)-- ++(-1.0pt,-1.0pt) -- ++(2.0pt,2.0pt) ++(-2.0pt,0) -- ++(2.0pt,-2.0pt);
\draw [color=ffxfqq] (593.,-0.2637154256)-- ++(-1.0pt,-1.0pt) -- ++(2.0pt,2.0pt) ++(-2.0pt,0) -- ++(2.0pt,-2.0pt);
\draw [color=ffxfqq] (599.,0.635059895)-- ++(-1.0pt,-1.0pt) -- ++(2.0pt,2.0pt) ++(-2.0pt,0) -- ++(2.0pt,-2.0pt);
\draw [color=ffxfqq] (617.,-0.1966334537)-- ++(-1.0pt,-1.0pt) -- ++(2.0pt,2.0pt) ++(-2.0pt,0) -- ++(2.0pt,-2.0pt);
\draw [color=ffxfqq] (641.,0.947791976)-- ++(-1.0pt,-1.0pt) -- ++(2.0pt,2.0pt) ++(-2.0pt,0) -- ++(2.0pt,-2.0pt);
\draw [color=ffxfqq] (647.,0.0745244618)-- ++(-1.0pt,-1.0pt) -- ++(2.0pt,2.0pt) ++(-2.0pt,0) -- ++(2.0pt,-2.0pt);
\draw [color=ffxfqq] (653.,-0.3562215691)-- ++(-1.0pt,-1.0pt) -- ++(2.0pt,2.0pt) ++(-2.0pt,0) -- ++(2.0pt,-2.0pt);
\draw [color=ffxfqq] (659.,-0.5249702239)-- ++(-1.0pt,-1.0pt) -- ++(2.0pt,2.0pt) ++(-2.0pt,0) -- ++(2.0pt,-2.0pt);
\draw [color=ffxfqq] (677.,-0.844991943)-- ++(-1.0pt,-1.0pt) -- ++(2.0pt,2.0pt) ++(-2.0pt,0) -- ++(2.0pt,-2.0pt);
\draw [color=ffxfqq] (683.,0.4693221606)-- ++(-1.0pt,-1.0pt) -- ++(2.0pt,2.0pt) ++(-2.0pt,0) -- ++(2.0pt,-2.0pt);
\draw [color=ffxfqq] (719.,-0.0689565863)-- ++(-1.0pt,-1.0pt) -- ++(2.0pt,2.0pt) ++(-2.0pt,0) -- ++(2.0pt,-2.0pt);
\draw [color=ffxfqq] (743.,-1.0909618098)-- ++(-1.0pt,-1.0pt) -- ++(2.0pt,2.0pt) ++(-2.0pt,0) -- ++(2.0pt,-2.0pt);
\draw [color=ffxfqq] (761.,-0.8228927805)-- ++(-1.0pt,-1.0pt) -- ++(2.0pt,2.0pt) ++(-2.0pt,0) -- ++(2.0pt,-2.0pt);
\draw [color=ffxfqq] (773.,0.5874825371)-- ++(-1.0pt,-1.0pt) -- ++(2.0pt,2.0pt) ++(-2.0pt,0) -- ++(2.0pt,-2.0pt);
\draw [color=ffxfqq] (797.,-0.3517407238)-- ++(-1.0pt,-1.0pt) -- ++(2.0pt,2.0pt) ++(-2.0pt,0) -- ++(2.0pt,-2.0pt);
\draw [color=ffxfqq] (809.,1.303900962)-- ++(-1.0pt,-1.0pt) -- ++(2.0pt,2.0pt) ++(-2.0pt,0) -- ++(2.0pt,-2.0pt);
\draw [color=ffxfqq] (821.,-0.0255805241)-- ++(-1.0pt,-1.0pt) -- ++(2.0pt,2.0pt) ++(-2.0pt,0) -- ++(2.0pt,-2.0pt);
\draw [color=ffxfqq] (827.,0.143047913)-- ++(-1.0pt,-1.0pt) -- ++(2.0pt,2.0pt) ++(-2.0pt,0) -- ++(2.0pt,-2.0pt);
\draw [color=ffxfqq] (839.,-0.1402061381)-- ++(-1.0pt,-1.0pt) -- ++(2.0pt,2.0pt) ++(-2.0pt,0) -- ++(2.0pt,-2.0pt);
\draw [color=ffxfqq] (863.,3.5229985309)-- ++(-1.0pt,-1.0pt) -- ++(2.0pt,2.0pt) ++(-2.0pt,0) -- ++(2.0pt,-2.0pt);
\draw [color=ffxfqq] (881.,-0.7333143075)-- ++(-1.0pt,-1.0pt) -- ++(2.0pt,2.0pt) ++(-2.0pt,0) -- ++(2.0pt,-2.0pt);
\draw [color=ffxfqq] (941.,1.0591219161)-- ++(-1.0pt,-1.0pt) -- ++(2.0pt,2.0pt) ++(-2.0pt,0) -- ++(2.0pt,-2.0pt);
\draw [color=ffxfqq] (947.,0.1401080697)-- ++(-1.0pt,-1.0pt) -- ++(2.0pt,2.0pt) ++(-2.0pt,0) -- ++(2.0pt,-2.0pt);
\draw [color=ffxfqq] (953.,1.7567872826)-- ++(-1.0pt,-1.0pt) -- ++(2.0pt,2.0pt) ++(-2.0pt,0) -- ++(2.0pt,-2.0pt);
\draw [color=ffxfqq] (983.,1.1314422806)-- ++(-1.0pt,-1.0pt) -- ++(2.0pt,2.0pt) ++(-2.0pt,0) -- ++(2.0pt,-2.0pt);
\draw [color=ffxfqq] (1013.,-2.8012041185)-- ++(-1.0pt,-1.0pt) -- ++(2.0pt,2.0pt) ++(-2.0pt,0) -- ++(2.0pt,-2.0pt);
\draw [color=ffxfqq] (1019.,-1.4158226895)-- ++(-1.0pt,-1.0pt) -- ++(2.0pt,2.0pt) ++(-2.0pt,0) -- ++(2.0pt,-2.0pt);
\draw [color=ffxfqq] (1031.,0.859875403)-- ++(-1.0pt,-1.0pt) -- ++(2.0pt,2.0pt) ++(-2.0pt,0) -- ++(2.0pt,-2.0pt);
\draw [color=ffxfqq] (1049.,-1.363137533)-- ++(-1.0pt,-1.0pt) -- ++(2.0pt,2.0pt) ++(-2.0pt,0) -- ++(2.0pt,-2.0pt);
\draw [color=ffxfqq] (1061.,0.8003739711)-- ++(-1.0pt,-1.0pt) -- ++(2.0pt,2.0pt) ++(-2.0pt,0) -- ++(2.0pt,-2.0pt);
\draw [color=ffxfqq] (1097.,0.6374445004)-- ++(-1.0pt,-1.0pt) -- ++(2.0pt,2.0pt) ++(-2.0pt,0) -- ++(2.0pt,-2.0pt);
\draw [color=ffxfqq] (1103.,1.2339034821)-- ++(-1.0pt,-1.0pt) -- ++(2.0pt,2.0pt) ++(-2.0pt,0) -- ++(2.0pt,-2.0pt);
\draw [color=ffxfqq] (1151.,1.7882627568)-- ++(-1.0pt,-1.0pt) -- ++(2.0pt,2.0pt) ++(-2.0pt,0) -- ++(2.0pt,-2.0pt);
\draw [color=ffxfqq] (1163.,-1.4342327159)-- ++(-1.0pt,-1.0pt) -- ++(2.0pt,2.0pt) ++(-2.0pt,0) -- ++(2.0pt,-2.0pt);
\draw [color=ffxfqq] (1181.,-1.0908656857)-- ++(-1.0pt,-1.0pt) -- ++(2.0pt,2.0pt) ++(-2.0pt,0) -- ++(2.0pt,-2.0pt);
\draw [color=ffxfqq] (1187.,-1.6004810259)-- ++(-1.0pt,-1.0pt) -- ++(2.0pt,2.0pt) ++(-2.0pt,0) -- ++(2.0pt,-2.0pt);
\draw [color=ffxfqq] (1229.,3.1518826273)-- ++(-1.0pt,-1.0pt) -- ++(2.0pt,2.0pt) ++(-2.0pt,0) -- ++(2.0pt,-2.0pt);
\draw [color=ffxfqq] (1277.,0.4673675216)-- ++(-1.0pt,-1.0pt) -- ++(2.0pt,2.0pt) ++(-2.0pt,0) -- ++(2.0pt,-2.0pt);
\draw [color=ffxfqq] (1301.,0.056961017)-- ++(-1.0pt,-1.0pt) -- ++(2.0pt,2.0pt) ++(-2.0pt,0) -- ++(2.0pt,-2.0pt);
\draw [color=ffxfqq] (1307.,0.345366543)-- ++(-1.0pt,-1.0pt) -- ++(2.0pt,2.0pt) ++(-2.0pt,0) -- ++(2.0pt,-2.0pt);
\draw [color=ffxfqq] (1319.,2.6144288154)-- ++(-1.0pt,-1.0pt) -- ++(2.0pt,2.0pt) ++(-2.0pt,0) -- ++(2.0pt,-2.0pt);
\draw [color=ffxfqq] (1361.,0.5260202429)-- ++(-1.0pt,-1.0pt) -- ++(2.0pt,2.0pt) ++(-2.0pt,0) -- ++(2.0pt,-2.0pt);
\draw [color=ffxfqq] (1367.,1.7001276776)-- ++(-1.0pt,-1.0pt) -- ++(2.0pt,2.0pt) ++(-2.0pt,0) -- ++(2.0pt,-2.0pt);
\draw [color=ffxfqq] (1373.,1.779841835)-- ++(-1.0pt,-1.0pt) -- ++(2.0pt,2.0pt) ++(-2.0pt,0) -- ++(2.0pt,-2.0pt);
\draw [color=ffxfqq] (1409.,-0.874488022)-- ++(-1.0pt,-1.0pt) -- ++(2.0pt,2.0pt) ++(-2.0pt,0) -- ++(2.0pt,-2.0pt);
\draw [color=ffxfqq] (1427.,-2.6618872033)-- ++(-1.0pt,-1.0pt) -- ++(2.0pt,2.0pt) ++(-2.0pt,0) -- ++(2.0pt,-2.0pt);
\draw [color=ffxfqq] (1433.,-1.1325830695)-- ++(-1.0pt,-1.0pt) -- ++(2.0pt,2.0pt) ++(-2.0pt,0) -- ++(2.0pt,-2.0pt);
\draw [color=ffxfqq] (1451.,0.1891777264)-- ++(-1.0pt,-1.0pt) -- ++(2.0pt,2.0pt) ++(-2.0pt,0) -- ++(2.0pt,-2.0pt);
\draw [color=ffxfqq] (1481.,1.0356002519)-- ++(-1.0pt,-1.0pt) -- ++(2.0pt,2.0pt) ++(-2.0pt,0) -- ++(2.0pt,-2.0pt);
\draw [color=ffxfqq] (1499.,1.3641814971)-- ++(-1.0pt,-1.0pt) -- ++(2.0pt,2.0pt) ++(-2.0pt,0) -- ++(2.0pt,-2.0pt);
\draw [color=ffxfqq] (1511.,2.2787121485)-- ++(-1.0pt,-1.0pt) -- ++(2.0pt,2.0pt) ++(-2.0pt,0) -- ++(2.0pt,-2.0pt);
\draw [color=ffxfqq] (1523.,0.1164210948)-- ++(-1.0pt,-1.0pt) -- ++(2.0pt,2.0pt) ++(-2.0pt,0) -- ++(2.0pt,-2.0pt);
\draw [color=ffxfqq] (1553.,1.6701710231)-- ++(-1.0pt,-1.0pt) -- ++(2.0pt,2.0pt) ++(-2.0pt,0) -- ++(2.0pt,-2.0pt);
\draw [color=ffxfqq] (1559.,-0.1613015958)-- ++(-1.0pt,-1.0pt) -- ++(2.0pt,2.0pt) ++(-2.0pt,0) -- ++(2.0pt,-2.0pt);
\draw [color=ffxfqq] (1571.,0.1486334805)-- ++(-1.0pt,-1.0pt) -- ++(2.0pt,2.0pt) ++(-2.0pt,0) -- ++(2.0pt,-2.0pt);
\draw [color=ffxfqq] (1583.,1.7659032609)-- ++(-1.0pt,-1.0pt) -- ++(2.0pt,2.0pt) ++(-2.0pt,0) -- ++(2.0pt,-2.0pt);
\draw [color=ffxfqq] (1601.,-1.9178874104)-- ++(-1.0pt,-1.0pt) -- ++(2.0pt,2.0pt) ++(-2.0pt,0) -- ++(2.0pt,-2.0pt);
\draw [color=ffxfqq] (1607.,-0.0453540593)-- ++(-1.0pt,-1.0pt) -- ++(2.0pt,2.0pt) ++(-2.0pt,0) -- ++(2.0pt,-2.0pt);
\draw [color=ffxfqq] (1619.,0.4384716041)-- ++(-1.0pt,-1.0pt) -- ++(2.0pt,2.0pt) ++(-2.0pt,0) -- ++(2.0pt,-2.0pt);
\draw [color=ffxfqq] (1667.,1.0194698955)-- ++(-1.0pt,-1.0pt) -- ++(2.0pt,2.0pt) ++(-2.0pt,0) -- ++(2.0pt,-2.0pt);
\draw [color=ffxfqq] (1697.,-0.0575126998)-- ++(-1.0pt,-1.0pt) -- ++(2.0pt,2.0pt) ++(-2.0pt,0) -- ++(2.0pt,-2.0pt);
\draw [color=ffxfqq] (1709.,0.8219953419)-- ++(-1.0pt,-1.0pt) -- ++(2.0pt,2.0pt) ++(-2.0pt,0) -- ++(2.0pt,-2.0pt);
\draw [color=ffxfqq] (1721.,-0.8744169616)-- ++(-1.0pt,-1.0pt) -- ++(2.0pt,2.0pt) ++(-2.0pt,0) -- ++(2.0pt,-2.0pt);
\draw [color=ffxfqq] (1733.,-1.4319584204)-- ++(-1.0pt,-1.0pt) -- ++(2.0pt,2.0pt) ++(-2.0pt,0) -- ++(2.0pt,-2.0pt);
\draw [color=ffxfqq] (1787.,1.1311319532)-- ++(-1.0pt,-1.0pt) -- ++(2.0pt,2.0pt) ++(-2.0pt,0) -- ++(2.0pt,-2.0pt);
\draw [color=ffxfqq] (1823.,-2.9905613373)-- ++(-1.0pt,-1.0pt) -- ++(2.0pt,2.0pt) ++(-2.0pt,0) -- ++(2.0pt,-2.0pt);
\draw [color=ffxfqq] (1871.,0.7329387643)-- ++(-1.0pt,-1.0pt) -- ++(2.0pt,2.0pt) ++(-2.0pt,0) -- ++(2.0pt,-2.0pt);
\draw [color=ffxfqq] (1877.,0.2279763372)-- ++(-1.0pt,-1.0pt) -- ++(2.0pt,2.0pt) ++(-2.0pt,0) -- ++(2.0pt,-2.0pt);
\draw [color=ffxfqq] (1889.,0.570428723)-- ++(-1.0pt,-1.0pt) -- ++(2.0pt,2.0pt) ++(-2.0pt,0) -- ++(2.0pt,-2.0pt);
\draw [color=ffxfqq] (1907.,0.996925624)-- ++(-1.0pt,-1.0pt) -- ++(2.0pt,2.0pt) ++(-2.0pt,0) -- ++(2.0pt,-2.0pt);
\draw [color=ffxfqq] (1913.,-3.6081316047)-- ++(-1.0pt,-1.0pt) -- ++(2.0pt,2.0pt) ++(-2.0pt,0) -- ++(2.0pt,-2.0pt);
\draw [color=ffxfqq] (1931.,0.411232203)-- ++(-1.0pt,-1.0pt) -- ++(2.0pt,2.0pt) ++(-2.0pt,0) -- ++(2.0pt,-2.0pt);
\draw [color=ffxfqq] (1949.,0.0948796333)-- ++(-1.0pt,-1.0pt) -- ++(2.0pt,2.0pt) ++(-2.0pt,0) -- ++(2.0pt,-2.0pt);
\draw [color=ffxfqq] (1973.,0.6231716048)-- ++(-1.0pt,-1.0pt) -- ++(2.0pt,2.0pt) ++(-2.0pt,0) -- ++(2.0pt,-2.0pt);
\draw [color=ffxfqq] (1979.,0.7025695646)-- ++(-1.0pt,-1.0pt) -- ++(2.0pt,2.0pt) ++(-2.0pt,0) -- ++(2.0pt,-2.0pt);
\draw [color=ffxfqq] (2027.,0.343793495)-- ++(-1.0pt,-1.0pt) -- ++(2.0pt,2.0pt) ++(-2.0pt,0) -- ++(2.0pt,-2.0pt);
\draw [color=ffxfqq] (2039.,0.3800214189)-- ++(-1.0pt,-1.0pt) -- ++(2.0pt,2.0pt) ++(-2.0pt,0) -- ++(2.0pt,-2.0pt);
\draw [color=ffxfqq] (2063.,-0.9972370202)-- ++(-1.0pt,-1.0pt) -- ++(2.0pt,2.0pt) ++(-2.0pt,0) -- ++(2.0pt,-2.0pt);
\draw [color=ffxfqq] (2069.,-1.2770046326)-- ++(-1.0pt,-1.0pt) -- ++(2.0pt,2.0pt) ++(-2.0pt,0) -- ++(2.0pt,-2.0pt);
\draw [color=ffxfqq] (2081.,2.3785171312)-- ++(-1.0pt,-1.0pt) -- ++(2.0pt,2.0pt) ++(-2.0pt,0) -- ++(2.0pt,-2.0pt);
\draw [color=ffxfqq] (2099.,0.6189014531)-- ++(-1.0pt,-1.0pt) -- ++(2.0pt,2.0pt) ++(-2.0pt,0) -- ++(2.0pt,-2.0pt);
\draw [color=ffxfqq] (2111.,-2.0323239781)-- ++(-1.0pt,-1.0pt) -- ++(2.0pt,2.0pt) ++(-2.0pt,0) -- ++(2.0pt,-2.0pt);
\draw [color=ffxfqq] (2129.,0.3740976413)-- ++(-1.0pt,-1.0pt) -- ++(2.0pt,2.0pt) ++(-2.0pt,0) -- ++(2.0pt,-2.0pt);
\draw [color=ffxfqq] (2141.,-0.8930326914)-- ++(-1.0pt,-1.0pt) -- ++(2.0pt,2.0pt) ++(-2.0pt,0) -- ++(2.0pt,-2.0pt);
\draw [color=ffxfqq] (2153.,0.5673615494)-- ++(-1.0pt,-1.0pt) -- ++(2.0pt,2.0pt) ++(-2.0pt,0) -- ++(2.0pt,-2.0pt);
\draw [color=ffxfqq] (2207.,0.0224094154)-- ++(-1.0pt,-1.0pt) -- ++(2.0pt,2.0pt) ++(-2.0pt,0) -- ++(2.0pt,-2.0pt);
\draw [color=ffxfqq] (2213.,-0.134961301)-- ++(-1.0pt,-1.0pt) -- ++(2.0pt,2.0pt) ++(-2.0pt,0) -- ++(2.0pt,-2.0pt);
\draw [color=ffxfqq] (2237.,-0.3050084562)-- ++(-1.0pt,-1.0pt) -- ++(2.0pt,2.0pt) ++(-2.0pt,0) -- ++(2.0pt,-2.0pt);
\draw [color=ffxfqq] (2267.,-0.2813773355)-- ++(-1.0pt,-1.0pt) -- ++(2.0pt,2.0pt) ++(-2.0pt,0) -- ++(2.0pt,-2.0pt);
\draw [color=ffxfqq] (2273.,1.4698037127)-- ++(-1.0pt,-1.0pt) -- ++(2.0pt,2.0pt) ++(-2.0pt,0) -- ++(2.0pt,-2.0pt);
\draw [color=ffxfqq] (2297.,-0.5264596992)-- ++(-1.0pt,-1.0pt) -- ++(2.0pt,2.0pt) ++(-2.0pt,0) -- ++(2.0pt,-2.0pt);
\draw [color=ffxfqq] (2309.,2.5511369013)-- ++(-1.0pt,-1.0pt) -- ++(2.0pt,2.0pt) ++(-2.0pt,0) -- ++(2.0pt,-2.0pt);
\draw [color=ffxfqq] (2333.,-1.0020878652)-- ++(-1.0pt,-1.0pt) -- ++(2.0pt,2.0pt) ++(-2.0pt,0) -- ++(2.0pt,-2.0pt);
\draw [color=ffxfqq] (2339.,1.0749191048)-- ++(-1.0pt,-1.0pt) -- ++(2.0pt,2.0pt) ++(-2.0pt,0) -- ++(2.0pt,-2.0pt);
\draw [color=ffxfqq] (2351.,3.0424973615)-- ++(-1.0pt,-1.0pt) -- ++(2.0pt,2.0pt) ++(-2.0pt,0) -- ++(2.0pt,-2.0pt);
\draw [color=ffxfqq] (2357.,0.9382605141)-- ++(-1.0pt,-1.0pt) -- ++(2.0pt,2.0pt) ++(-2.0pt,0) -- ++(2.0pt,-2.0pt);
\draw [color=ffxfqq] (2381.,-0.4096479762)-- ++(-1.0pt,-1.0pt) -- ++(2.0pt,2.0pt) ++(-2.0pt,0) -- ++(2.0pt,-2.0pt);
\draw [color=ffxfqq] (2393.,-0.8628808376)-- ++(-1.0pt,-1.0pt) -- ++(2.0pt,2.0pt) ++(-2.0pt,0) -- ++(2.0pt,-2.0pt);
\draw [color=ffxfqq] (2399.,0.6357257727)-- ++(-1.0pt,-1.0pt) -- ++(2.0pt,2.0pt) ++(-2.0pt,0) -- ++(2.0pt,-2.0pt);
\draw [color=ffxfqq] (2411.,0.0773654226)-- ++(-1.0pt,-1.0pt) -- ++(2.0pt,2.0pt) ++(-2.0pt,0) -- ++(2.0pt,-2.0pt);
\draw [color=ffxfqq] (2417.,-0.6264087852)-- ++(-1.0pt,-1.0pt) -- ++(2.0pt,2.0pt) ++(-2.0pt,0) -- ++(2.0pt,-2.0pt);
\draw [color=ffxfqq] (2441.,-0.4606264266)-- ++(-1.0pt,-1.0pt) -- ++(2.0pt,2.0pt) ++(-2.0pt,0) -- ++(2.0pt,-2.0pt);
\draw [color=ffxfqq] (2447.,-0.5964489482)-- ++(-1.0pt,-1.0pt) -- ++(2.0pt,2.0pt) ++(-2.0pt,0) -- ++(2.0pt,-2.0pt);
\draw [color=ffxfqq] (2459.,0.9986614253)-- ++(-1.0pt,-1.0pt) -- ++(2.0pt,2.0pt) ++(-2.0pt,0) -- ++(2.0pt,-2.0pt);
\draw [color=ffxfqq] (2531.,0.4725236784)-- ++(-1.0pt,-1.0pt) -- ++(2.0pt,2.0pt) ++(-2.0pt,0) -- ++(2.0pt,-2.0pt);
\end{scriptsize}
\end{tikzpicture}

\caption{Values of both $\frac{C_d-M^*_{d}}{d}$ (on the left) and $\frac{C_d-M^{**}_{d}}{d}$ (on the right) for the selected prime numbers $d$.}
\label{Fig1}
\end{figure}

\subsubsection{A first test in each case}\label{ss2}

From the $152$ left plotted values on Figure \ref{Fig1}, we draw the regression line: its slope $s^*$ is approximately $1.94\times10^{-4}$ and its 
intercept is approximately $0.352$. Let us consider the following null hypothesis $H^*_0$: $s^*=0$. We have to calculate $T^*=\frac{s^*-0}
{\hat\sigma_{s^*}}$, where $\hat\sigma_{s^*}$ is the estimated standard deviation of the slope. We obtain $\hat\sigma_{s^*}\approx 1.35\times10^{-4}$ 
and $T^*\approx 1.43$. $T^*$ follows a student's t-distribution with $(152-2)$ degrees of freedom \cite[Proposition 1.8]{cor}. The acceptance region of 
the hypothesis test with a $5\%$ risk is approximately $[-1.976,1.976]$. Thus it can be concluded that we cannot reject the null hypothesis, 
\textit{i.e.} the fact that the slope $s^*$ is not significantly different from zero.

From the $153$ right plotted values on Figure \ref{Fig1}, we draw the regression line: its slope $s^{**}$ is approximately $-1.88\times10^{-4}$ and its 
intercept is approximately $0.511$. Let us consider the following null hypothesis $H^{**}_0$: $s^{**}=0$. We again have to calculate $T^{**}=
\frac{s^{**}-0}{\hat\sigma_{s^{**}}}$. We here obtain $\hat\sigma_{s^{**}}\approx 1.25\times10^{-4}$ and $T^{**}\approx -1.50$. $T^{**}$ follows a 
student's t-distribution with $(153-2)$ degrees of freedom. The acceptance region of the hypothesis test with a $5\%$ risk is approximately 
$[-1.976,1.976]$. Thus it can be concluded that we cannot reject the fact that the slope $s^{**}$ is not significantly different from zero.

\subsubsection{A set of tests in each case}

Figure \ref{Fig3} below shows the distribution of the values of $\frac{C_d-M^*_d}{d}$ (on the left) and $\frac{C_d-M^{**}_d}{d}$ (on the right) for the 
considered values of $d$.

\begin{figure}[ht]
\centering
\definecolor{xfqqff}{rgb}{0.5,0.,1.}
\begin{tikzpicture}[line cap=round,line join=round,>=triangle 45,x=0.6cm,y=0.3cm]
\draw[->,color=black] (-3.535852581203811,0.) -- (5,0.);
\foreach \x in {-3,-2,-1,1,2,3,4}\draw[shift={(\x,0)},color=black] (0pt,2pt) -- (0pt,-2pt) node[below] {\footnotesize $\x$};
\draw[->,color=black] (0.,-0.75) -- (0.,19.1);
\foreach \y in {2,4,6,8,10,12,14,16,18}\draw[shift={(0,\y)},color=black] (2pt,0pt) -- (-2pt,0pt) node[left] {\footnotesize $\y$};
\draw[color=black] (-3pt,-8.3pt) node {\footnotesize $0$};\clip(-3.535852581203811,-0.75) rectangle (5,19.1);
\fill[color=xfqqff,fill=xfqqff,fill opacity=0.5] (-3.,0.) -- (-3.,2.) -- (-2.75,2.) -- (-2.75,0.) -- cycle;
\fill[color=xfqqff,fill=xfqqff,fill opacity=0.5] (4.25,0.) -- (4.25,1.) -- (4.5,1.) -- (4.5,0.) -- cycle;
\fill[color=xfqqff,fill=xfqqff,fill opacity=0.5] (4.,1.) -- (3.,1.) -- (3.,0.) -- (4.,0.) -- cycle;
\fill[color=xfqqff,fill=xfqqff,fill opacity=0.5] (2.75,0.) -- (-2.25,0.) -- (-2.25,2.) -- (-2.,2.) -- (-2.,1.) -- (-1.75,1.) -- (-1.75,3.) -- (-1.5,3.) -- (-1.5,2.) -- (-1.25,2.) -- (-1.25,4.) -- (-1.,4.) -- (-1.,7.) -- (-0.5,7.) -- (-0.5,6.) -- (-0.25,6.) -- (-0.25,12.) -- (0.,12.) -- (0.,15.) -- (0.25,15.) -- (0.25,8.) -- (0.5,8.) -- (0.5,14.) -- (1.,14.) -- (1.,11.) -- (1.25,11.) -- (1.25,12.) -- (1.5,12.) -- (1.5,9.) -- (1.75,9.) -- (1.75,8.) -- (2.,8.) -- (2.,4.) -- (2.5,4.) -- (2.5,2.) -- (2.75,2.) -- cycle;
\draw (-3.,2.)-- (-2.75,2.);\draw (-2.75,2.)-- (-2.75,0.);\draw (-2.25,0.)-- (-2.25,2.);\draw (-2.25,2.)-- (-2.,2.);\draw (-3.,2.)-- (-3.,0.);
\draw (-2.,1.)-- (-1.75,1.);\draw (3.,1.)-- (3.,0.);\draw (-2.,2.)-- (-2.,0.);\draw (-1.75,3.)-- (-1.5,3.);\draw (-1.5,2.)-- (-1.25,2.);
\draw (-1.25,4.)-- (-1.,4.);\draw (-1.75,3.)-- (-1.75,0.);\draw (-1.5,0.)-- (-1.5,3.);\draw (-1.25,0.)-- (-1.25,2.);\draw (-1.25,4.)-- (-1.25,2.);
\draw (-1.,7.)-- (-1.,0.);\draw (-1.,7.)-- (-0.5,7.);\draw (2.5,2.)-- (2.75,2.);\draw (2.5,4.)-- (2.5,0.);\draw (2.5,4.)-- (2.,4.);
\draw (2.25,4.)-- (2.25,0.);\draw (4.25,1.)-- (4.5,1.);\draw (4.5,0.)-- (4.5,1.);\draw (4.25,0.)-- (4.25,1.);\draw (4.,0.)-- (4.,1.);
\draw (3.75,0.)-- (3.75,1.);\draw (3.5,0.)-- (3.5,1.);\draw (3.25,0.)-- (3.25,1.);\draw (3.,0.)-- (3.,1.);\draw (2.75,2.)-- (2.75,0.);
\draw (3.,1.)-- (4.,1.);\draw (0.25,8.)-- (0.5,8.);\draw (-0.5,6.)-- (-0.25,6.);\draw (-0.25,12.)-- (0.,12.);\draw (1.25,12.)-- (1.5,12.);
\draw (1.5,9.)-- (1.75,9.);\draw (1.75,8.)-- (2.,8.);\draw (1.,11.)-- (1.25,11.);\draw (1.25,12.)-- (1.25,0.);\draw (1.5,12.)-- (1.5,0.);
\draw (1.75,9.)-- (1.75,0.);\draw (2.,8.)-- (2.,0.);\draw (-0.75,7.)-- (-0.75,0.);\draw (-0.5,7.)-- (-0.5,0.);\draw (-0.25,12.)-- (-0.25,0.);
\draw (0.5,14.)-- (1.,14.);\draw (1.,14.)-- (1.,0.);\draw (0.75,14.)-- (0.75,0.);\draw (0.5,14.)-- (0.5,0.);\draw (0.,15.)-- (0.,0.);
\draw (0.25,15.)-- (0.25,0.);\draw (0.,15.)-- (0.25,15.);\draw (3.65,4.3) node {$\frac{C_d-M^*_d}{d}$};
\draw (-2.2,17) node {$Frequency$};
\end{tikzpicture}
\definecolor{ffxfqq}{rgb}{1.,0.5,0.}
\begin{tikzpicture}[line cap=round,line join=round,>=triangle 45,x=0.6cm,y=0.3cm]
\draw[->,color=black] (-4.16,0.) -- (4.4,0.);
\foreach \x in {-4,-3,-2,-1,1,2,3}\draw[shift={(\x,0)},color=black] (0pt,2pt) -- (0pt,-2pt) node[below] {\footnotesize $\x$};
\draw[->,color=black] (0.,-0.75) -- (0.,19.1);
\foreach \y in {2,4,6,8,10,12,14,16,18}\draw[shift={(0,\y)},color=black] (2pt,0pt) -- (-2pt,0pt) node[left] {\footnotesize $\y$};
\draw[color=black] (-3pt,-8.3pt) node {\footnotesize $0$};\clip(-4.16,-0.75) rectangle (4.2,19.1);
\fill[color=ffxfqq,fill=ffxfqq,fill opacity=0.5] (-3.75,0.) -- (-3.75,1.) -- (-3.5,1.) -- (-3.5,0.) -- cycle;
\fill[color=ffxfqq,fill=ffxfqq,fill opacity=0.5] (-3.,0.) -- (-3.,2.) -- (-2.5,2.) -- (-2.5,0.) -- cycle;
\fill[color=ffxfqq,fill=ffxfqq,fill opacity=0.5] (3.5,1.) -- (3.75,1.) -- (3.75,0.) -- (3.5,0.) -- cycle;
\fill[color=ffxfqq,fill=ffxfqq,fill opacity=0.5] (3.25,2.) -- (3.25,0.) -- (3.,0.) -- (3.,2.) -- cycle;
\fill[color=ffxfqq,fill=ffxfqq,fill opacity=0.5] (2.5,3.) -- (2.75,3.) -- (2.75,0.) -- (2.25,0.) -- (2.25,2.) -- (2.5,2.) -- cycle;
\fill[color=ffxfqq,fill=ffxfqq,fill opacity=0.5] (2.,0.) -- (-2.25,0.) -- (-2.25,1.) -- (-1.75,1.) -- (-1.75,2.) -- (-1.5,2.) -- (-1.5,6.) -- (-1.25,6.) -- (-1.25,7.) -- (-0.25,7.) -- (-0.25,9.) -- (0.,9.) -- (0.,17.) -- (0.25,17.) -- (0.25,13.) -- (0.5,13.) -- (0.5,19.) -- (0.75,19.) -- (0.75,13.) -- (1.,13.) -- (1.,10.) -- (1.25,10.) -- (1.25,8.) -- (1.5,8.) -- (1.5,9.) -- (1.75,9.) -- (1.75,4.) -- (2.,4.) -- cycle;
\draw (3.2,4.3) node {$\frac{C_d-M^{**}_d}{d}$};\draw (-2.4,17) node {$Frequency$};
\draw (-3.75,1.)-- (-3.5,1.);\draw (-3.,2.)-- (-2.5,2.);\draw (-2.25,1.)-- (-1.75,1.);\draw (-1.75,2.)-- (-1.5,2.);\draw (-1.5,6.)-- (-1.25,6.);
\draw (-1.25,7.)-- (-0.25,7.);\draw (-0.25,9.)-- (0.,9.);\draw (0.5,19.)-- (0.75,19.);\draw (0.75,13.)-- (1.,13.);\draw (0.25,13.)-- (0.5,13.);
\draw (0.,17.)-- (0.25,17.);\draw (1.,10.)-- (1.25,10.);\draw (1.25,8.)-- (1.5,8.);\draw (1.75,9.)-- (1.5,9.);\draw (1.75,4.)-- (2.,4.);
\draw (2.25,2.)-- (2.5,2.);\draw (2.75,3.)-- (2.5,3.);\draw (3.,2.)-- (3.25,2.);\draw (3.5,1.)-- (3.75,1.);\draw (3.,2.)-- (3.,0.);
\draw (2.75,0.)-- (2.75,3.);\draw (3.25,2.)-- (3.25,0.);\draw (3.5,0.)-- (3.5,1.);\draw (2.25,2.)-- (2.25,0.);\draw (2.,0.)-- (2.,4.);
\draw (-3.5,1.)-- (-3.5,0.);\draw (-3.,2.)-- (-3.,0.);\draw (-2.5,2.)-- (-2.5,0.);\draw (-2.25,1.)-- (-2.25,0.);\draw (-1.5,2.)-- (-1.5,0.);
\draw (-1.5,6.)-- (-1.5,2.);\draw (-1.75,2.)-- (-1.75,0.);\draw (-2.,1.)-- (-2.,0.);\draw (-1.25,7.)-- (-1.25,0.);\draw (-2.75,2.)-- (-2.75,0.);
\draw (-3.75,1.)-- (-3.75,0.);\draw (-1.,7.)-- (-1.,0.);\draw (-0.75,0.)-- (-0.75,7.);\draw (-0.5,7.)-- (-0.5,0.);\draw (-0.25,0.)-- (-0.25,9.);
\draw (0.25,17.)-- (0.25,0.);\draw (0.,17.)-- (0.,0.);\draw (0.5,0.)-- (0.5,13.);\draw (0.5,19.)-- (0.5,13.);\draw (0.75,19.)-- (0.75,0.);
\draw (1.,0.)-- (1.,13.);\draw (1.25,10.)-- (1.25,0.);\draw (1.5,0.)-- (1.5,9.);\draw (1.75,9.)-- (1.75,0.);\draw (2.5,3.)-- (2.5,0.);
\draw (3.75,0.)-- (3.75,1.);
\end{tikzpicture}
\caption{Distribution of values of both $\frac{C_d-M^*_{d}}{d}$ (on the left) and $\frac{C_d-M^{**}_{d}}{d}$ (on the right) for the selected prime 
numbers $d$.}
\label{Fig3}
\end{figure}
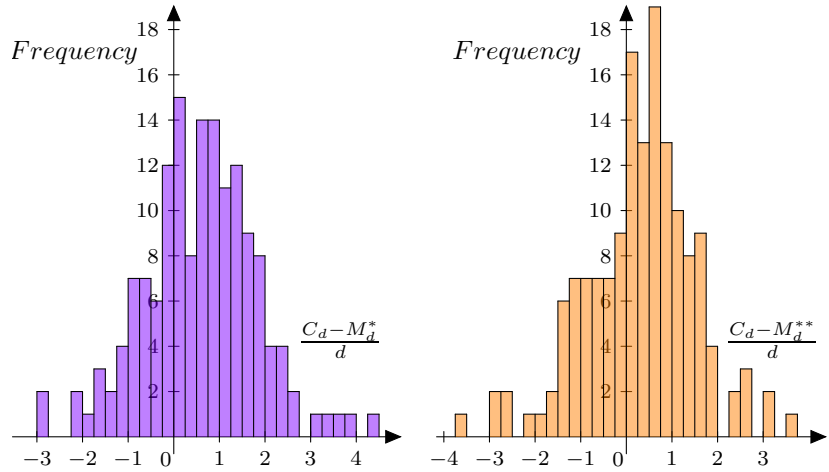

From the result of the first test in section \ref{ss2}, we would consider in this part that the function that maps $d$ onto $\frac{C_d-M^*_{d}}{d}$ 
behaves like a random variable with an expected value $\Lambda^*$ close to $0.576$ and with a symmetric probability distribution (for the considered 
values of $d$). On that assumption we will verify whether the values higher than $\Lambda^*$ and those smaller than $\Lambda^*$ are randomly scattered 
over the ordered absolute values of $\frac{C_d-A_d}{d}$ (the null hypothesis) or not. To this end we use a non-parametric test, the Mann–Whitney $U$ 
test: we determine the ranks of $|\frac{C_d-M^*_{d}}{d}|$ for each $d$ in the considered interval (see \cite{W} or \cite{MW}). The ranks sum of the 
values higher than $\Lambda^*$ is approximately normally distributed. The value of $U_1^*$ is about $-0.673$. The acceptance region of the hypothesis 
test with a $5\%$ risk being approximately $[-1.960,1.960]$, it can be concluded that we cannot reject the null hypothesis, \textit{i.e.} the fact that 
the greater and smaller than $\Lambda^*$ values of $\frac{C_d-M^*_{d}}{d}$ are randomly scattered: the symmetry of the probability distribution of this 
potential pseudorandom variable can not be rejected.

On the same assumption, we will also verify whether the values higher than $\Lambda^*$ and those smaller than $\Lambda^*$ are randomly scattered over 
the considered prime numbers (the null hypothesis) or not. To this end we again use the Mann–Whitney $U$ test. The prime numbers ranks sum of the values 
higher than $\Lambda^*$ is approximately normally distributed. The value of $U_2^*$ is about $0.721$. It can be concluded that we cannot reject the 
null hypothesis, \textit{i.e.} the fact that the greater and smaller than $\Lambda^*$ values of $\frac{C_d-M^*_{d}}{d}$ are randomly scattered over the 
considered prime numbers.

From the result of the second test in \ref{ss2}, we would consider in this part that the function that maps $d$ onto $\frac{C_d-M^{**}_{d}}{d}$ behaves 
like a random variable with an expected value $\Lambda^{**}$ close to $0.297$ and with a symmetric probability distribution (for the considered values 
of $d$). On that assumption we will verify whether the values higher than $\Lambda^{**}$ and those smaller than $\Lambda^{**}$ are randomly scattered 
over the ordered absolute values of $\frac{C_d-A_d}{d}$ (the null hypothesis) or not. To this end we again use the Mann–Whitney $U$ test. The value of 
$U_1^{**}$ is here about $-1.08$. It can once more be concluded that we cannot reject the fact that the greater and smaller than $\Lambda^{**}$ values 
of $\frac{C_d-M^{**}_{d}}{d}$ are randomly scattered: the symmetry of the probability distribution of this potential pseudorandom variable can not be 
rejected.

On the same assumption, we will verify whether the values higher than $\Lambda^{**}$ and those smaller than $\Lambda^{**}$ are randomly scattered over 
the considered prime numbers or not. To this end we again use the Mann–Whitney $U$ test. The value of $U_2^{**}$ is here about $-1.77$. It can once more 
be concluded that we cannot reject the fact that the greater and smaller than $\Lambda^{**}$ values of $\frac{C_d-M^{**}_{d}}{d}$ are randomly scattered 
over the considered prime numbers.

\subsubsection{Perspective}

Both first test and set of tests could not invalidate the fact that the function that maps $d$ onto $\frac{C_d-M^*_{d}}{d}$ and the one that maps $d$ 
onto $\frac{C_d-M^{**}_{d}}{d}$ seem to behave like random variables with respectively $\Lambda^*$ and $\Lambda^{**}$ as expected values. $\Lambda^*$ and 
$\Lambda^{**}$ are both positive numbers, whereas $\Lambda$ is negative; the added geometrical constraints seem to reduce in average the number of all 
but the simple points generated by a randomly chosen minimal Besicovitch arrangement. This reduction is slightly highter than expected. Our arrangements 
cannot obviously be limited to the considered geometrically constrained arrangement. Adding constraints for better modeling the arrangements and finding a way to 
determine whether the considered functions could be considered as high-quality pseudo-random number generators (PRNG) sketch some avenues for future 
research on the subject.

\newpage

\section*{Appendix}

\begin{table}[ht]
\tiny
\renewcommand{\arraystretch}{1.5}
% [inline block 1: 29 envs, 23816 chars -> data_tex | \begin{tabular*}{1\textwidth}{@{\extracolsep{\fill}}|@{\hskip.6mm}c@{\hskip.6mm}||@{\hskip.6mm}c@{\hskip.6mm}|@{\hskip.6...]


\caption{The complexities values. Values of $d$ with one asterisk correspond to arrangements where $d\equiv1\mod3$ and where all the lines (except those 
of $\{L_0,L_{-1},L_\infty\}$ and $\{L_\omega,L_{\omega^2}\}$) do not pass through the origin, whereas values of $d$ with two asterisks correspond to 
arrangements where $d\equiv2\mod3$ and where all the lines (except those of $\{L_0,L_{-1},L_\infty\}$) do not pass through the origin.}
\label{Comp}
\end{table}

\newpage

\bibliography{bib}

\end{document}